\def\url@leostyle{%
  \@ifundefined{selectfont}{\def\UrlFont{\sf}}{\def\UrlFont{\small\sffamily}}}
\numberwithin{equation}{section}
\numberwithin{figure}{section}
\theoremstyle{plain}
\newtheorem{thm}{\protect\theoremname}
  \theoremstyle{plain}
  \newtheorem{cor}{\protect\corollaryname}
  \theoremstyle{plain}
  \newtheorem{lem}{\protect\lemmaname}
  \theoremstyle{remark}
  \newtheorem{rem}{\protect\remarkname}
\DeclareMathOperator {\esssup}{ess \ sup}
\DeclareMathOperator {\supp}{supp}
\DeclareMathOperator {\BMO}{BMO}
  \providecommand{\lemmaname}{Lemma}
   \providecommand{\corollaryname}{Corollary}
  \providecommand{\remarkname}{Remark}
\providecommand{\theoremname}{Theorem}
\newcommand\rurl[2]{%
  \href{http://#1}{\nolinkurl{#1}}%
}
\begin{document}

\title[Bloom estimates for iterated commutators of fractional integrals]{On Bloom type estimates for iterated commutators of fractional integrals}

\author[N. Accomazzo]{Natalia Accomazzo} 
\address[N. Accomazzo]{ Department of Mathematics, University of the Basque Country, Bilbao, Spain}
\email{nataliaceleste.accomazzo@ehu.eus}

\author[J.C. Mart\'{\i}nez-Perales]{Javier C. Mart\'{\i}nez-Perales}
\address[J.C. Mart\'{\i}nez-Perales]{
BCAM\textendash  Basque Center for Applied Mathematics, Bilbao, Spain}
\email{jmartinez@bcamath.org}

\author[I.P. Rivera-R\'{\i}os]{Israel P. Rivera-R\'{\i}os}
\address[I.P. Rivera-R\'{\i}os]{ Department of Mathematics, University of the Basque Country and
BCAM \textendash Basque Center for Applied Mathematics, Bilbao, Spain}
\email{petnapet@gmail.com}

\begin{abstract}
In this paper we provide quantitative Bloom type estimates for iterated commutators of fractional integrals improving and extending results from \cite{HRS}. We give new proofs for those inequalities relying upon a new sparse domination that we provide as well in this paper and also in techniques developed in the recent paper \cite{LORR2}. We extend as well the necessity established in \cite{HRS} to iterated commutators providing a new proof. As a consequence of the preceding results we recover the one weight estimates in \cite{CUM, BMMST} and establish the sharpness in the iterated case. Our result provides as well a new characterization of the $\BMO$ space.
\end{abstract}

\maketitle

\section{Introduction and main results}

We recall that given $0<\alpha<n$, the fractional integral operator $I_\alpha$, or Riesz potential, on $\mathbb{R}^n$ is defined by 
\[I_\alpha f(x):=\int_{\mathbb{R}^n}\frac{f(y)}{|x-y|^{n-\alpha}}dy\]
and the corresponding associated maximal function $M_\alpha$ by
\[M_\alpha f(x)=\sup_{x\in Q}\frac{1}{|Q|^{1-\frac{\alpha}{n}}}\int_Q|f(y)|dy.\]

Those operators are bounded from $L^p$ to $L^q$ provided that $1<p<\frac{n}{\alpha}$ and $\frac{1}{q}+\frac{\alpha}{n}=\frac{1}{p}$ (see \cite[Chapter 5]{S} for those and other classical related results). 

The study of weighted estimates for these operators and slightly more general ones, is not interesting just for its own sake but also for its applications to partial di\-ffe\-ren\-tial equations, Sobolev embeddings or quantum mechanics (see for instance \cite[Section 9]{FJW} or \cite{SW}).  $A_{p,q}$ weights, which were introduced by Muckenhoupt and Wheeden \cite{MW}, can be considered the class that governs the behaviour of fractional operators. We recall that, given $1<p<q<\infty$,  $w\in A_{p,q}$ if
\[
[w]_{A_{p,q}}=\sup_{Q}\frac{1}{|Q|}\int_{Q}w^{q}\left(\frac{1}{|Q|}\int_{Q}w^{-p'}\right)^{\frac{q}{p'}}<\infty.
\]
Since $1<p<q<\infty$, using H\"older's inequality, it is not hard to check that
\begin{equation}\label{eq:ApAqApq}
[w^{p}]_{A_{p}}\leq[w]_{A_{p,q}}^{\frac{p}{q}}\quad\text{and}\quad[w^q]_{A_{q}}\leq[w]_{A_{p,q}}.
\end{equation}
where, $A_r$ ($1<r<\infty$) is the the Muckenhoupt class, namely, $v\in A_r$ if
\[[v]_{A_r}=\sup_Q\frac{1}{|Q|}\int_{Q}v\left(\frac{1}{|Q|}\int_{Q}v^{-\frac{r'}{r}}\right)^{\frac{r}{r'}}<\infty.\]
%
During the last decade, many authors have devoted plenty of works to the study of quantitative weighted estimates, in other words, estimates in  which the quantitative dependence on the $A_p$ constant $[w]_{A_p}$ or, in its case, on the $A_{p,q}$ constant $[w]_{A_{p,q}}$, is the central point. The $A_2$ Theorem, namely the linear dependence on the $A_2$ constant for Calder\'on-Zygmund operators \cite{H} can be considered the most representative result in this line. In the case of fractional integrals, 
the sharp dependence on the $A_{p,q}$ constant was obtained by Lacey, Moen, P\'erez and Torres \cite{LMPT}. The precise statement is the following
\begin{thm}\label{Thm:LMPT}
Let $\alpha\in(0,n)$ and $1<p<\frac{n}{\alpha}$
and $q$ defined by $\frac{1}{q}+\frac{\alpha}{n}=\frac{1}{p}$. Then, if $w\in A_{p,q}$ we have that
\[
\|I_\alpha f\|_{L^{q}(w^{q})}\leq c_{n,\alpha}[w]_{A_{p,q}}^{\left(1-\frac{\alpha}{n}\right)\max\left\{ 1,\frac{p'}{q}\right\} } \|f\|_{L^{p}(w^{p})}
\]
and the estimate is sharp in the sense that the inequality does not hold if we replace the exponent of the $A_{p,q}$ constant by a smaller one.
\end{thm}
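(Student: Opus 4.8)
The plan is to run a sparse‑domination argument: discretize $I_\alpha$, dualize, feed in the $A_{p,q}$ condition, and then prove a sharp two‑weight estimate for the resulting bilinear sum; the optimality is obtained afterwards from power‑weight examples.

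\emph{Step 1 (reduction to sparse model operators).} First I would invoke the (by now standard) pointwise sparse domination of the Riesz potential: there are finitely many dyadic lattices $\mathcal D_1,\dots,\mathcal D_{c_n}$ such that for every $f$ one can choose sparse families $\mathcal S_j\subset\mathcal D_j$ — i.e.\ each $Q\in\mathcal S_j$ carries a set $E_Q\subset Q$ with $|E_Q|\ge\tfrac12|Q|$ and the $E_Q$ pairwise disjoint — for which
\[
|I_\alpha f(x)|\le c_{n,\alpha}\sum_{j=1}^{c_n}\mathcal A_{\mathcal S_j,\alpha}(|f|)(x),\qquad
\mathcal A_{\mathcal S,\alpha}(g):=\sum_{Q\in\mathcal S}|Q|^{\frac{\alpha}{n}}\Big(\tfrac1{|Q|}\textstyle\int_Q g\Big)\mathbf 1_Q .
\]
Thus it suffices to prove the claimed bound for a single $\mathcal A_{\mathcal S,\alpha}$ with a constant independent of $\mathcal S$. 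Writing $\sigma=w^{-p'}$ and $u=w^{q}$, the substitution $f\mapsto f\sigma$ (which is an isometry $L^{p}(\sigma)\to L^{p}(w^{p})$) together with duality of $L^{q}(u)$ against $L^{q'}(u)$ reduces the matter to the bilinear estimate
\[
\sum_{Q\in\mathcal S}|Q|^{1+\frac{\alpha}{n}}\langle\sigma\rangle_Q\langle u\rangle_Q\,\langle f\rangle_Q^{\sigma}\,\langle g\rangle_Q^{u}
\;\le\; C\,[w]_{A_{p,q}}^{\gamma}\,\|f\|_{L^{p}(\sigma)}\|g\|_{L^{q'}(u)},
\]
where $\langle\cdot\rangle_Q^{\mu}$ is the $\mu$-average over $Q$ and $\gamma=(1-\tfrac{\alpha}{n})\max\{1,\tfrac{p'}{q}\}$.

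\emph{Step 2 (inserting the $A_{p,q}$ condition).} The two identities $\tfrac1{p'}+\tfrac1q=1-\tfrac{\alpha}{n}$ and $\tfrac1p+\tfrac1{q'}=1+\tfrac{\alpha}{n}$ let one distribute the exponents so that the coefficient factorises cleanly: since $\langle\sigma\rangle_Q\langle u\rangle_Q=\big(\langle u\rangle_Q\langle\sigma\rangle_Q^{q/p'}\big)^{1/q}\langle\sigma\rangle_Q^{1/p}\langle u\rangle_Q^{1/q'}\le[w]_{A_{p,q}}^{1/q}\langle\sigma\rangle_Q^{1/p}\langle u\rangle_Q^{1/q'}$ and $1+\tfrac{\alpha}{n}-\tfrac1p-\tfrac1{q'}=0$, one gets $|Q|^{1+\frac{\alpha}{n}}\langle\sigma\rangle_Q\langle u\rangle_Q\le[w]_{A_{p,q}}^{1/q}\sigma(Q)^{1/p}u(Q)^{1/q'}$. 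The problem is thereby reduced to bounding $\sum_{Q\in\mathcal S}\big(\langle f\rangle_Q^{\sigma}\sigma(Q)^{1/p}\big)\big(\langle g\rangle_Q^{u}u(Q)^{1/q'}\big)$ by a constant times $\|f\|_{L^{p}(\sigma)}\|g\|_{L^{q'}(u)}$.

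\emph{Step 3 (the hard part: a sharp two‑weight sparse estimate).} This last sum is the real content and the main obstacle: because $p<q$ there is an intrinsic asymmetry between the two factors — one cannot simply combine Hölder with the Carleson embedding without wasting a power of $[w]_{A_{p,q}}$. I would prove it by a stopping‑time decomposition of $\mathcal S$ into principal cubes for $f$ relative to $\sigma$ and for $g$ relative to $u$, estimating the contribution of each block using only the $A_{p,q}$ condition and Hölder's inequality, and then summing a geometric series; whatever $A_\infty$‑type constants appear for $\sigma$ and $u$ are absorbed into $[w]_{A_{p,q}}$ via \eqref{eq:ApAqApq}, namely $[u]_{A_\infty}\le[u]_{A_q}=[w^{q}]_{A_q}\le[w]_{A_{p,q}}$ and, by duality of Muckenhoupt classes, $[\sigma]_{A_\infty}\le[\sigma]_{A_{p'}}=[w^{p}]_{A_p}^{p'/p}\le[w]_{A_{p,q}}^{p'/q}$. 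The dichotomy in $\max\{1,\tfrac{p'}{q}\}$ is exactly the dichotomy $q\ge p'$ versus $q<p'$, according to whether the decisive summation step is carried out on the $L^{q}(u)$ side (yielding the exponent $1$) or on the dual $L^{p'}(\sigma)$ side (yielding $p'/q$); doing this bookkeeping tightly, so that the final exponent is $(1-\tfrac{\alpha}{n})\max\{1,\tfrac{p'}{q}\}$ and not larger, is the crux. (Alternatively one could attempt a Welland‑type pointwise bound reducing $I_\alpha$ to a product of fractional maximal operators of shifted order, but that route carries an analogous weight mismatch and is not obviously shorter.)

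\emph{Step 4 (sharpness).} For the optimality of the exponent I would test the inequality on the power weights $w_\delta(x)=|x|^{\delta}$, which lie in $A_{p,q}$ precisely for $-\tfrac{n}{q}<\delta<\tfrac{n}{p'}$ and satisfy $[w_\delta]_{A_{p,q}}\to\infty$ as $\delta$ tends to either endpoint, paired with suitable test functions (indicators of balls, or homogeneous functions, with scale tied to the distance of $\delta$ to the endpoint). A direct computation of both sides shows that the quotient grows like a power of $[w_\delta]_{A_{p,q}}$; the two endpoints $\delta\uparrow\tfrac{n}{p'}$ and $\delta\downarrow-\tfrac{n}{q}$ contribute, respectively, the exponents $(1-\tfrac{\alpha}{n})\tfrac{p'}{q}$ and $(1-\tfrac{\alpha}{n})$, so no exponent smaller than $(1-\tfrac{\alpha}{n})\max\{1,\tfrac{p'}{q}\}$ can work — and one of these can be replaced by the other using the self‑duality of the statement under $w\mapsto w^{-1}$, $(p,q)\mapsto(q',p')$, for which $[w^{-1}]_{A_{q',p'}}=[w]_{A_{p,q}}^{p'/q}$.
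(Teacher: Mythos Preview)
The paper does not prove this theorem; it is quoted as the main result of \cite{LMPT} and used purely as background. The closest the paper comes is the sparse-operator inequality \eqref{eq:Frac}, which is invoked as a black box with the remark that its proof ``is implicit in one of the proofs of \cite[Theorem~2.6]{LMPT}''. So there is no in-paper argument to compare your proposal against.

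That said, your plan is consistent with the paper's methodology elsewhere: Step~1 is the $m=0$ case of Theorem~\ref{acsparse}, and Step~4 is the $m=0$ analogue of the power-weight computation carried out (for $m\ge1$, with $b=\log|x|$) in the sharpness part of Corollary~\ref{cor:oneweight}. Your Steps~2--3 together amount to a direct proof of \eqref{eq:Frac}, which the paper simply cites. One caution on Step~3: the factorisation in Step~2 extracts only $[w]_{A_{p,q}}^{1/q}$, so Step~3 must still produce the remaining $[w]_{A_{p,q}}^{1/p'}$ (respectively $[w]_{A_{p,q}}^{(1-\alpha/n)p'/q-1/q}$ in the other regime). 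Because $1/p+1/q'=1+\alpha/n>1$, the residual bilinear sum $\sum_{Q}\sigma(Q)^{1/p}u(Q)^{1/q'}\langle f\rangle_Q^{\sigma}\langle g\rangle_Q^{u}$ cannot be handled by a single H\"older step, and a generic stopping-time/Carleson argument that brings in $[\sigma]_{A_\infty}$ and $[u]_{A_\infty}$ does not automatically land on exactly that power. The sharp bound does go through by a sparse/duality route, but the exponent bookkeeping you flag as ``the crux'' is genuinely the entire content of the theorem and is not carried out in your sketch.
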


Given a locally integrable function $b$ and a linear operator $G$, the commutator $[b,G]$ is the operator defined by
\[ [b,G]f(x)=b(x)Gf(x)-G(bf)(x)\]
and the iterated commutator of order $m$, $G_b^m$, is defined inductively for $b\in L^m_{\text{loc}}(\mathbb{R}^n)$, by
\[G_b^mf(x)=[b,G_b^{m-1}]f(x)\]
where $G_b^0=G$. 

Returning to quantitative estimates, the counterpart of Theorem \ref{Thm:LMPT} for commutators was obtained by Cruz-Uribe and Moen \cite{CUM}. The precise statement of their result is the following.
\begin{thm}\label{Thm:CUM}
Let $\alpha\in(0,n)$ and $1<p<\frac{n}{\alpha}$
and $q$ defined by $\frac{1}{q}+\frac{\alpha}{n}=\frac{1}{p}$. Then, if $w\in A_{p,q}$ and $b\in \BMO$ we have that
\[
\|[b,I_\alpha] f\|_{L^{q}(w^{q})}\leq c_{n,\alpha}\|b\|_{\BMO}[w]_{A_{p,q}}^{\left(2-\frac{\alpha}{n}\right)\max\left\{ 1,\frac{p'}{q}\right\} }\|f\|_{L^{p}(w^{p})}
\]
and the estimate is sharp in the sense that the inequality does not hold if we replace the exponent of the $A_{p,q}$ constant by a smaller one.
\end{thm}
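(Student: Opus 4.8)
The plan is to derive the bound from Theorem~\ref{Thm:LMPT} via the conjugation trick of Coifman--Rochberg--Weiss together with the Cauchy integral formula, in the quantitative form due to Chung--Pereyra--P\'erez. Since $b$ is real valued, for $z\in\mathbb{C}$ and $f$ bounded with compact support put $F(z):=e^{zb}\,I_\alpha(e^{-zb}f)$. For a.e.\ $x$ the scalar function $z\mapsto F(z)(x)$ is holomorphic near $0$, with $F(0)=I_\alpha f$ and $F'(0)=b\,I_\alpha f-I_\alpha(bf)=[b,I_\alpha]f$; hence, for every $\varepsilon>0$,
\[
[b,I_\alpha]f=\frac{1}{2\pi i}\int_{|z|=\varepsilon}\frac{F(z)}{z^{2}}\,dz,
\qquad\text{so that}\qquad
\|[b,I_\alpha]f\|_{L^q(w^q)}\le\frac{1}{\varepsilon}\sup_{|z|=\varepsilon}\|F(z)\|_{L^q(w^q)}
\]
by Minkowski's integral inequality. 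If $t=\operatorname{Re}z$ and $v:=we^{tb}$, then $\|F(z)\|_{L^q(w^q)}=\|I_\alpha(e^{-zb}f)\|_{L^q(v^q)}$ and $\|e^{-zb}f\|_{L^p(v^p)}=\|f\|_{L^p(w^p)}$, so Theorem~\ref{Thm:LMPT} gives
\[
\|F(z)\|_{L^q(w^q)}\le c_{n,\alpha}\,[we^{tb}]_{A_{p,q}}^{(1-\frac{\alpha}{n})\max\{1,p'/q\}}\|f\|_{L^p(w^p)}.
\]

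Everything now reduces to the stability estimate that is the crux of the argument: one can choose $\varepsilon>0$ so that
\[
\sup_{|t|\le\varepsilon}[we^{tb}]_{A_{p,q}}\le C(n,p,q)\,[w]_{A_{p,q}}
\qquad\text{and}\qquad
\frac1\varepsilon\le C(n,p,q)\,\|b\|_{\BMO}\,[w]_{A_{p,q}}^{\max\{1,p'/q\}}.
\]
To prove it I would, on each cube $Q$, factor $e^{\pm tsb}=e^{\pm ts(b-b_Q)}e^{\pm tsb_Q}$, using the exponent $s=q$ in the $w^q$-average and $s=p'$ in the $w^{-p'}$-average; the constant factors $e^{\pm tsb_Q}$ cancel exactly in the $A_{p,q}$ product. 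Apply H\"older to $\langle w^s e^{\pm ts(b-b_Q)}\rangle_Q$ with a reverse-H\"older exponent $1+\delta$ for $w^s$ (admissible when $1+\delta\le 1+c_n/[w^s]_{A_\infty}$) and then the John--Nirenberg inequality to the factor $e^{\pm ts(1+\delta)'(b-b_Q)}$ (bounded once $|t|\,s\,(1+\delta)'\le c_n/\|b\|_{\BMO}$, i.e.\ roughly $|t|\le c_n/(s[w^s]_{A_\infty}\|b\|_{\BMO})$). Taking the more restrictive of the two constraints, one may take $\varepsilon=c_n\bigl(\max\{q[w^q]_{A_\infty},p'[w^{-p'}]_{A_\infty}\}\,\|b\|_{\BMO}\bigr)^{-1}$. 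Finally, \eqref{eq:ApAqApq} yields $[w^q]_{A_\infty}\le c_n[w^q]_{A_q}\le c_n[w]_{A_{p,q}}$, whereas \eqref{eq:ApAqApq} applied to the dual weight $w^{-1}\in A_{q',p'}$ (for which $[w^{-1}]_{A_{q',p'}}=[w]_{A_{p,q}}^{p'/q}$) gives $[w^{-p'}]_{A_\infty}\le c_n[w^{-p'}]_{A_{p'}}\le c_n[w]_{A_{p,q}}^{p'/q}$; since $[w]_{A_{p,q}}\ge1$, the two together produce the asserted bound for $1/\varepsilon$. I expect this lemma to be the main obstacle: the delicate point is that the two halves of the $A_{p,q}$ functional are governed by \emph{different} $A_\infty$ constants, and keeping them separate is precisely what recovers the power $\max\{1,p'/q\}$ rather than something larger.

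Combining the displays and extending from the dense class of bounded compactly supported $f$ by density gives
\[
\|[b,I_\alpha]f\|_{L^q(w^q)}\le c_{n,p,q,\alpha}\,\|b\|_{\BMO}\,[w]_{A_{p,q}}^{\bigl(1+(1-\frac{\alpha}{n})\bigr)\max\{1,p'/q\}}\|f\|_{L^p(w^p)}=c\,\|b\|_{\BMO}\,[w]_{A_{p,q}}^{(2-\frac{\alpha}{n})\max\{1,p'/q\}}\|f\|_{L^p(w^p)},
\]
as desired. For the sharpness of the exponent I would follow \cite{CUM} and test the inequality on explicit near-extremizers: the $\BMO$ symbol $b(x)=\log|x|$ (so $\|b\|_{\BMO}\sim1$), a power weight $w(x)=|x|^{a}$ with $a$ at distance $\delta$ from the endpoint value for which $[w]_{A_{p,q}}=\infty$, so that $[w]_{A_{p,q}}\sim\delta^{-\max\{1,p'/q\}}$, and a suitably $\delta$-tuned truncated power $f_\delta$. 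A direct computation shows that $[b,I_\alpha]f_\delta$, evaluated away from the origin, picks up an extra logarithmic factor of order $\log(1/\delta)$ compared with $I_\alpha f_\delta$, which raises the order of blow-up in $\delta$ by exactly one power; comparing $\|[b,I_\alpha]f_\delta\|_{L^q(w^q)}/\|f_\delta\|_{L^p(w^p)}\sim\delta^{-(2-\frac{\alpha}{n})\max\{1,p'/q\}}$ with $[w]_{A_{p,q}}^{\theta}\sim\delta^{-\theta\max\{1,p'/q\}}$ rules out every $\theta<2-\frac{\alpha}{n}$. (An alternative, in line with the sparse techniques of \cite{LORR2} mentioned in the abstract, is to prove a pointwise sparse domination for $[b,I_\alpha]$ and then run the weighted estimate for the resulting sparse forms with a $\BMO$ symbol; the $A_{p,q}$ bookkeeping for those forms would still be the delicate step.)
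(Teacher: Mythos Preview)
Your argument is correct, but it is \emph{not} the route this paper takes. What you have written is essentially the original proof of Cruz-Uribe and Moen \cite{CUM}: the Coifman--Rochberg--Weiss conjugation combined with the Cauchy integral formula, in the quantitative form of Chung--Pereyra--P\'erez, reducing the commutator bound to Theorem~\ref{Thm:LMPT} plus a stability lemma for $[we^{tb}]_{A_{p,q}}$. Your sketch of that lemma (reverse H\"older on each half of the $A_{p,q}$ functional, John--Nirenberg on the exponential, then \eqref{eq:ApAqApq} and its dual version to land on $[w]_{A_{p,q}}^{\max\{1,p'/q\}}$) is the right one and would go through. The paper, by contrast, proves Theorem~\ref{Thm:CUM} as the $m=1$, $\mu=\lambda$ case of Corollary~\ref{cor:oneweight}, which in turn rests on the pointwise sparse domination of Theorem~\ref{acsparse} and Lemma~\ref{Lem:OutBMO}: one dominates $[b,I_\alpha]f$ by sparse forms $\mathcal{A}_{\alpha,\mathcal{S}}^{1,h}(b,f)$, pulls the $\BMO$ factors out via Lemma~\ref{Lem:OutBMO}, and then invokes \eqref{eq:Sparse} and \eqref{eq:Frac}. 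The conjugation route is shorter and leans only on Theorem~\ref{Thm:LMPT} as a black box; the sparse route is what makes the paper's two-weight Bloom Theorem~\ref{thm:BloomFrac} possible and gives the iterated case $m\ge1$ in one stroke, which the Cauchy formula alone does not deliver cleanly (the $m$-th derivative brings in $m$ factors of $1/\varepsilon$, but the stability window for $\varepsilon$ does not iterate without extra input).

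Two small remarks on your sharpness sketch. First, the scaling you wrote, $[w]_{A_{p,q}}\sim\delta^{-\max\{1,p'/q\}}$, is off: with the standard choice $w_\delta(x)=|x|^{(n-\delta)/p'}$ one has $[w_\delta]_{A_{p,q}}\asymp\delta^{-q/p'}$, and the norm ratio $\|[b,I_\alpha]f_\delta\|_{L^q(w_\delta^q)}/\|f_\delta\|_{L^p(w_\delta^p)}\asymp\delta^{-(2-\alpha/n)}$; comparing exponents then forces $\theta\ge(2-\alpha/n)\,p'/q$, which is the claimed bound in the regime $p'/q\ge1$. Second, you should say explicitly that the regime $p'/q<1$ is handled by duality, using $[(I_\alpha)_b]^*=-(I_\alpha)_b$ and $[w^{-1}]_{A_{q',p'}}=[w]_{A_{p,q}}^{p'/q}$; the paper does exactly this before running the power-weight computation.
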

One of the main purposes of this paper is to obtain quantitative two weight estimates for iterated commutators of fractional integrals assuming that the symbol $b$ belongs to a ``modified'' $\BMO$ class. 
The motivation to study this kind of estimates can be traced back to 1985 to the work of Bloom \cite{B}. For the Hilbert transform $H$, he proved that if $\mu,\lambda\in A_p$ and $\nu=\left(\frac{\mu}{\lambda}\right)^\frac{1}{p}$, then \[\|[b,H]f\|_{L^p(\lambda)}\leq c_{\mu,\lambda} \|f\|_{L^p(\mu)}\] if and only if $b\in \BMO_\nu$, namely, $b$ is a locally integrable function such that
\[\|b\|_{\BMO_\nu}=\sup_Q\frac{1}{\nu(Q)}\int_Q|b-b_Q|<\infty.\] 
Some years later, Garc\'{\i}a-Cuerva, Harboure, Segovia and Torrea \cite{GCHST}, extended the sufficiency of that result to iterated commutators of strongly singular integrals, assuming  that $b\in \BMO_{\nu^{\frac{1}{m}}}$, where $m$ stands for the order of the commutator.

Recently Lacey, Holmes and Wick  \cite{HLW} extended Bloom's result to Calder\'on-Zygmund operators, and a quantitative version of the sufficiency in that result was provided in \cite{LORR}.

For iterated commutators of Calder\'on-Zygmund operators, Holmes and Wick \cite{HW} established that $b\in \BMO\cap \BMO_\nu$ is a sufficient condition for the two weights estimate to hold. However that result was substantially improved in \cite{LORR2} where it was proved that $\BMO\cap \BMO_\nu \subset  \BMO_{\nu^{\frac{1}{m}}}$ and that $b\in \BMO_{\nu^{\frac{1}{m}}}$ is also a necessary condition, besides providing a quantitative version of the sufficiency under the same condition.

At this point we present our contribution. Combining a sparse domination result that will be presented in Section \ref{Sec:Sparse} with techniques in \cite{LORR2} we obtain the following result.

\begin{thm}
\label{thm:BloomFrac}Let $\alpha\in(0,n)$ and $1<p<\frac{n}{\alpha}$, $q$ defined by $\frac{1}{q}+\frac{\alpha}{n}=\frac{1}{p}$ and $m$ a positive integer. Assume that $\mu,\lambda\in A_{p,q}$ and that $\nu=\frac{\mu}{\lambda}$.
If $b\in \BMO_{\nu^\frac{1}{m}}$, then
\begin{equation}\label{eq:depIalphabm}
\|(I_\alpha)_b^mf\|_{L^{q}(\lambda^{q})}\leq c_{m,n,\alpha,p}\|b\|_{\BMO_{\nu^{\frac{1}{m}}}}^m \kappa_m\|f\|_{L^{p}(\mu^{p})},
\end{equation}
where 
\[
\kappa_m=\sum_{h=0}^m\binom{m}{h} \left([\lambda]_{A_{p,q}}^{\frac{h}{m}}[\mu]_{A_{p,q}}^{\frac{m-h}{m}}\right)^{\left(1-\frac{\alpha}{n}\right)\max\left\{1,\frac{p'}{q}\right\}}P(m,h)Q(m,h)\]
and
\[\begin{split}
P(m,h)&\leq \left([\lambda^q]_{A_q}^{\frac{m+(h+1)}{2}}[\mu^q]_{A_q}^{\frac{m-(h+1)}{2}}\right)^{\frac{m-h}{m}\max\{1,\frac{1}{q-1}\}}
\\
Q(m,h)&\leq \left([\lambda^p]_{A_p}^{\frac{h-1}{2}}[\mu^p]_{A_p}^{m-\frac{h-1}{2}}\right)^{\frac{h}{m}\max\{1,\frac{1}{p-1}\}}.
\end{split}
\]
Conversely if for every set $E$ of finite measure we have that
\begin{equation}\label{eq:StChar}
\|(I_{\alpha})_{b}^{m}\chi_{E}\|_{L^{q}(\lambda^{q})}\leq c\mu^{p}(E)^{\frac{1}{p}},
\end{equation}
then $b\in \BMO_{\nu^{\frac{1}{m}}}$.
\end{thm}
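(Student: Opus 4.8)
The plan is to establish the two halves of the statement separately: the quantitative bound \eqref{eq:depIalphabm} by dualizing, applying the sparse domination of Section~\ref{Sec:Sparse}, and then running the resulting bilinear sparse forms through the scheme of \cite{LORR2}; and the converse by an elementary test–function argument that extends the necessity of \cite{HRS} from the case $m=1$ to arbitrary $m$.

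\emph{The estimate \eqref{eq:depIalphabm}.} By duality it suffices to bound $|\langle (I_\alpha)_b^mf,g\rangle|$ for $f\in L^p(\mu^p)$ and $g$ with $\|g\lambda^{-1}\|_{L^{q'}}=1$. Applying the sparse domination theorem of Section~\ref{Sec:Sparse} and expanding $(b(x)-b(y))^m=\sum_{h=0}^m\binom mh(b(x)-b_Q)^{h}(b_Q-b(y))^{m-h}$ on each sparse cube, one obtains a sparse family $\mathcal S=\mathcal S(f,g)$ with
\[
|\langle (I_\alpha)_b^mf,g\rangle|\;\lesssim_{m,n,\alpha}\;\sum_{h=0}^m\binom mh\,\Lambda_h,\qquad \Lambda_h:=\sum_{Q\in\mathcal S}\frac{|Q|^{\frac\alpha n}}{|Q|}\left(\int_Q|b-b_Q|^{h}|f|\right)\left(\int_Q|b-b_Q|^{m-h}|g|\right).
\]
Now I use that $\mu,\lambda\in A_{p,q}$ forces $\nu^{1/m}=(\mu/\lambda)^{1/m}$ to be an $A_\infty$ weight, with $[\nu^{1/m}]_{A_\infty}$ quantitatively controlled by $[\mu]_{A_{p,q}}$ and $[\lambda]_{A_{p,q}}$, so $\BMO_{\nu^{1/m}}$ enjoys a John--Nirenberg self-improvement. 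Following \cite{LORR2}, this lets one strip the factors $|b-b_Q|$ from $\Lambda_h$: after passing to a larger (still sparse) family $\widetilde{\mathcal S}$ and using reverse-H\"older inequalities to convert powers $\langle\nu^{1/m}\rangle_{Q'}^{k}$ of averages into averages $\langle\nu^{k/m}\rangle_{Q'}$ of powers, one arrives at
\[
\Lambda_h\;\lesssim_{m}\;\|b\|_{\BMO_{\nu^{1/m}}}^{m}\;P(m,h)\,Q(m,h)\sum_{Q\in\widetilde{\mathcal S}}\frac{|Q|^{\frac\alpha n}}{|Q|}\left(\int_Q\nu^{\frac hm}|f|\right)\left(\int_Q\nu^{\frac{m-h}m}|g|\right),
\]
where $Q(m,h)$ collects the reverse-H\"older constants of the $h$ peelings acting on the $L^p$ (i.e.\ $f$) side, producing $[\lambda^p]_{A_p},[\mu^p]_{A_p}$ with exponent $\frac hm\max\{1,\frac1{p-1}\}$, and $P(m,h)$ those of the $m-h$ peelings on the $L^q$ (i.e.\ $g$) side, producing $[\lambda^q]_{A_q},[\mu^q]_{A_q}$ with exponent $\frac{m-h}m\max\{1,\frac1{q-1}\}$.

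It then remains to bound the sparse fractional bilinear form $\mathcal A^\alpha_{\widetilde{\mathcal S}}(\nu^{h/m}f,\nu^{(m-h)/m}g)$ by $\|f\|_{L^p(\mu^p)}$. Setting $v:=\mu^{\frac{m-h}m}\lambda^{\frac hm}$, a direct computation from $\nu=\mu/\lambda$ gives $\|v\,\nu^{h/m}f\|_{L^p}=\|f\|_{L^p(\mu^p)}$ and $\|v^{-1}\nu^{(m-h)/m}g\|_{L^{q'}}=\|g\lambda^{-1}\|_{L^{q'}}=1$, so the quantitative Lacey--Moen--P\'erez--Torres–type weighted bound for the sparse fractional operator (the sparse model behind Theorem~\ref{Thm:LMPT}) yields
\[
\mathcal A^\alpha_{\widetilde{\mathcal S}}(\nu^{h/m}f,\nu^{(m-h)/m}g)\;\lesssim_{n,\alpha,p}\;[v]_{A_{p,q}}^{\left(1-\frac\alpha n\right)\max\{1,\frac{p'}q\}}\,\|f\|_{L^p(\mu^p)}.
\]
Finally, H\"older's inequality in the definition of the $A_{p,q}$ class (geometric interpolation between $\mu$ and $\lambda$) gives $[v]_{A_{p,q}}=[\mu^{(m-h)/m}\lambda^{h/m}]_{A_{p,q}}\le[\mu]_{A_{p,q}}^{\frac{m-h}m}[\lambda]_{A_{p,q}}^{\frac hm}$. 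Collecting $\binom mh$, the resulting factor $\big([\lambda]_{A_{p,q}}^{h/m}[\mu]_{A_{p,q}}^{(m-h)/m}\big)^{(1-\frac\alpha n)\max\{1,p'/q\}}$, $P(m,h)$ and $Q(m,h)$, and summing over $h$, reproduces exactly $\kappa_m$ and hence \eqref{eq:depIalphabm}. I expect the \emph{main obstacle} to be the peeling step: carrying out the reduction of \cite{LORR2} so that the effective weight of each sparse fractional form is \emph{precisely} the geometric interpolant $\mu^{(m-h)/m}\lambda^{h/m}$ (so that the sharp Lacey--Moen--P\'erez--Torres exponent appears), and keeping track of \emph{exactly} which of $[\lambda^q]_{A_q},[\mu^q]_{A_q},[\lambda^p]_{A_p},[\mu^p]_{A_p}$, with which fractional power, enters $P(m,h)$ and $Q(m,h)$; this is where I would expect most of the technical work to lie.

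\emph{The converse.} We may assume $b$ real-valued. By induction on $m$, using $(I_\alpha)_b^m=[b,(I_\alpha)_b^{m-1}]$, the iterated commutator has the kernel representation $(I_\alpha)_b^m\chi_E(x)=\int_E\big(b(x)-b(y)\big)^m|x-y|^{\alpha-n}\,dy$. Fix a cube $Q$ and put $E=\{y\in Q:b(y)>b_Q\}$, $F=\{y\in Q:b(y)\le b_Q\}$, so that $\int_E|b-b_Q|=\int_F|b-b_Q|=\tfrac12\int_Q|b-b_Q|$ and $|E|+|F|=|Q|$; interchanging the roles of $E$ and $F$ if necessary (which only changes $(I_\alpha)_b^m\chi_E$ by a sign), we may assume $|F|\ge|Q|/2$. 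For $x\in F$ and $y\in E$ we have $b(x)\le b_Q<b(y)$, hence $b(y)-b(x)\ge b(y)-b_Q=|b(y)-b_Q|\ge0$, while $|x-y|\le\sqrt n\,|Q|^{1/n}$; therefore, for every $x\in F$,
\[
|(I_\alpha)_b^m\chi_E(x)|\ge c_{n,\alpha}|Q|^{\frac\alpha n-1}\int_E|b-b_Q|^m\ge c_{n,\alpha}|Q|^{\frac\alpha n-1}\frac{\big(\int_E|b-b_Q|\big)^m}{|E|^{m-1}}\ge c_{m,n,\alpha}|Q|^{\frac\alpha n-m}\Big(\int_Q|b-b_Q|\Big)^m,
\]
using Jensen's inequality and $|E|\le|Q|$. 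Raising to the power $q$, integrating against $\lambda^q$ over $F$, using $\lambda^q\in A_q\subset A_\infty$ (by \eqref{eq:ApAqApq}) with $|F|\ge|Q|/2$ to get $\lambda^q(F)\gtrsim_{[\lambda]_{A_{p,q}}}\lambda^q(Q)$, and invoking the hypothesis \eqref{eq:StChar} together with $\mu^p(E)\le\mu^p(Q)$ gives
\[
\Big(\int_Q|b-b_Q|\Big)^m\;\lesssim_{m,n,\alpha,[\lambda]_{A_{p,q}}}\;c^m\,|Q|^{m-\frac\alpha n}\,\frac{\mu^p(Q)^{1/p}}{\lambda^q(Q)^{1/q}}\;=\;c^m\,|Q|^m\,\langle\mu^p\rangle_Q^{1/p}\langle\lambda^q\rangle_Q^{-1/q},
\]
where the last equality uses $\frac\alpha n=\frac1p-\frac1q$. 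It then suffices to check that $\langle(\mu/\lambda)^{1/m}\rangle_Q^{m}\gtrsim_{[\mu]_{A_{p,q}}}\langle\mu^p\rangle_Q^{1/p}\langle\lambda^q\rangle_Q^{-1/q}$: Cauchy--Schwarz gives $\langle\nu^{1/m}\rangle_Q\ge\langle\nu^{-1/m}\rangle_Q^{-1}$, Jensen (concavity of $t\mapsto t^{1/m}$) gives $\langle\nu^{-1/m}\rangle_Q\le\langle\mu^{-1}\lambda\rangle_Q^{1/m}$, and H\"older together with $\langle\lambda^p\rangle_Q\le\langle\lambda^q\rangle_Q^{p/q}$ (Jensen, since $p<q$) and the identity $\langle\mu^{-p'}\rangle_Q^{1/p'}\langle\mu^p\rangle_Q^{1/p}\le[\mu^p]_{A_p}^{1/p}$ give $\langle\mu^{-1}\lambda\rangle_Q\le[\mu^p]_{A_p}^{1/p}\langle\mu^p\rangle_Q^{-1/p}\langle\lambda^q\rangle_Q^{1/q}$. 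Combining these, taking $m$-th roots, and using $[\mu^p]_{A_p}\le[\mu]_{A_{p,q}}^{p/q}$ from \eqref{eq:ApAqApq}, we obtain $\frac1{\nu^{1/m}(Q)}\int_Q|b-b_Q|\lesssim c\,[\mu]_{A_{p,q}}^{1/(mq)}$ uniformly in $Q$, i.e.\ $b\in\BMO_{\nu^{1/m}}$. The only delicate point here is the new step that is absent in \cite{HRS}, namely the Jensen linearization $\int_E|b-b_Q|^m\ge|E|^{1-m}(\int_E|b-b_Q|)^m$ that reduces the $m$-th power kernel to the first moment; the final chain of elementary inequalities requires no deep property of $\nu^{1/m}$.
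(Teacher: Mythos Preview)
Your overall strategy for the upper bound---dualize, apply the sparse domination of Section~\ref{Sec:Sparse}, strip the $|b-b_Q|$ factors via the machinery of \cite{LORR2}, and finish with the sharp weighted bound for the sparse fractional operator---is exactly the paper's. However, the paper does \emph{not} reduce to the clean bilinear form $\sum_Q|Q|^{\alpha/n-1}\bigl(\int_Q\nu^{h/m}|f|\bigr)\bigl(\int_Q\nu^{(m-h)/m}|g|\bigr)$ that you describe, and the reverse-H\"older step you invoke to reach it is not part of the argument (nor is it clear such a reduction holds with the stated constants). Instead the paper applies Lemma~\ref{Lem:OutBMO} with $\eta=\nu^{1/m}$ to replace $\int_Q|b-b_Q|^j|h|$ by $\int_Q A_{\tilde{\mathcal S},\eta}^j|h|$ (the $j$-fold iterate of the weighted sparse operator $A_{\tilde{\mathcal S},\eta}:=\eta\,A_{\tilde{\mathcal S}}$), then uses self-adjointness of $A_{\tilde{\mathcal S}}$ to rewrite the pairing as the $L^q(\lambda^q)$-norm of the \emph{composition} $A_{\tilde{\mathcal S}}\,A_{\tilde{\mathcal S},\eta}^{m-h-1}\,I_{\mathcal S,\eta}^\alpha\,A_{\tilde{\mathcal S},\eta}^h(|f|)$, and bounds each factor in turn: the $m-h$ sparse operators acting in the chain of $L^q(\lambda^q\eta^{lq})$ spaces give $P(m,h)$, the middle $I_{\mathcal S}^\alpha$ in the weight $\lambda\eta^{m-h}$ gives (after H\"older interpolation of the $A_{p,q}$ constant) the factor $\bigl([\lambda]_{A_{p,q}}^{h/m}[\mu]_{A_{p,q}}^{(m-h)/m}\bigr)^{(1-\alpha/n)\max\{1,p'/q\}}$, and the $h$ sparse operators acting in the chain of $L^p(\lambda^p\eta^{lp})$ spaces give $Q(m,h)$. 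So your attribution of $P$ and $Q$ to the $L^q$ and $L^p$ sides is correct in spirit, but they are sparse-operator norms in interpolated weighted spaces rather than reverse-H\"older constants.

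Your converse argument is correct and takes a genuinely different route from the paper's. The paper works with the John--Str\"omberg oscillation $\omega_\lambda(b,Q)$ (Lemma~\ref{BMO}), splits $Q$ around a \emph{median} $m_b(Q)$ into sets $A',B'$ with $|A'|,|B'|\simeq|Q|$ and $b(y)-b(x)\ge\omega_\lambda(b,Q)$ for $(x,y)\in B'\times A'$, applies H\"older over $A'\subset Q$ against $\lambda^{-q'}$, and at the end uses a \emph{reverse-H\"older} inequality for $\mu^p\in A_p$ together with H\"older at a carefully chosen exponent to extract $\langle\nu^{1/m}\rangle_Q^m$. You instead split around the \emph{mean} $b_Q$, use Jensen to linearize the $m$-th moment, and finish with the elementary chain Cauchy--Schwarz\,/\,Jensen\,/\,H\"older to pass from $\langle\mu^p\rangle_Q^{1/p}\langle\lambda^q\rangle_Q^{-1/q}$ to $\langle\nu^{1/m}\rangle_Q^m$. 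Your approach is more self-contained (no $\omega_\lambda$ lemma, no reverse H\"older) and goes straight to the $\BMO_{\nu^{1/m}}$ norm; the paper's median-based argument, on the other hand, avoids the $A_\infty$ comparison $\lambda^q(F)\gtrsim\lambda^q(Q)$ that you need. One minor slip: in your displayed inequality the hypothesis~\eqref{eq:StChar} is applied once, so the constant should be $c$, not $c^m$.
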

In the case $m=1$ a qualitative version of this result was established by Holmes, Rahm and Spencer \cite{HRS}. Besides providing a new proof of the result in \cite{HRS}, our theorem improves that result in several directions. We provide quantitative bounds instead of qualitative ones, we extend the result to iterated commutators and we also prove that actually a restricted strong type $(p,q)$ estimate is neccesary, instead of the usual strong type $(p,q)$.

If we restrict ourselves to the case 
$\mu=\lambda$ we have the following result.
\begin{cor}\label{cor:oneweight}
Let $\alpha\in(0,n)$ and $1<p<\frac{n}{\alpha}$
, $q$ defined by $\frac{1}{q}+\frac{\alpha}{n}=\frac{1}{p}$ and $m$ a non negative integer. Assume that $w \in A_{p,q}$ and that $b\in \BMO$. Then
\begin{equation}\label{Cor1}
\|(I_\alpha)_b^mf\|_{L^{q}(w^{q})}\leq c_{n,p,q}\|b\|_{\BMO}^m[w]_{A_{p,q}}^{\left(m+1-\frac{\alpha}{n}\right)\max\left\{ 1,\frac{p'}{q}\right\} }\|f\|_{L^{p}(w^{p})},
\end{equation}
and the estimate is sharp in the sense that the inequality does not hold if we replace the exponent of the $A_{p,q}$ constant by a smaller one.

Conversely if $m>0$ and for every set $E$ of finite measure we have that
\[
\|(I_{\alpha})_{b}^{m}\chi_{E}\|_{L^{q}(w^{q})}\leq cw^{p}(E)^{\frac{1}{p}},
\]
then $b\in \BMO$.
\end{cor}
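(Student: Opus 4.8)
The plan is to obtain all three assertions of Corollary~\ref{cor:oneweight} from Theorem~\ref{thm:BloomFrac} by specializing to $\mu=\lambda=w$; the only genuinely new input is the sharpness of the exponent for $m\ge 2$. When $\mu=\lambda=w$ we have $\nu=\mu/\lambda\equiv 1$, hence $\nu^{1/m}\equiv 1$, and since $\nu^{1/m}(Q)=|Q|$ the space $\BMO_{\nu^{1/m}}$ coincides, with equal norm, with $\BMO$. Consequently the hypothesis \eqref{eq:StChar} of Corollary~\ref{cor:oneweight} is exactly the one in the converse part of Theorem~\ref{thm:BloomFrac} with both weights equal to $w$, and its conclusion $b\in\BMO_{\nu^{1/m}}$ reads $b\in\BMO$; this settles the necessity statement at once, the restriction $m>0$ being the same one already imposed in Theorem~\ref{thm:BloomFrac}.

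For \eqref{Cor1} I would simplify $\kappa_m$ under $\mu=\lambda=w$. Then $[\lambda]_{A_{p,q}}=[\mu]_{A_{p,q}}=[w]_{A_{p,q}}$, $[\lambda^q]_{A_q}=[\mu^q]_{A_q}=[w^q]_{A_q}$ and $[\lambda^p]_{A_p}=[\mu^p]_{A_p}=[w^p]_{A_p}$, so the factor $[\lambda]_{A_{p,q}}^{h/m}[\mu]_{A_{p,q}}^{(m-h)/m}$ collapses to $[w]_{A_{p,q}}$, while, since the two exponents inside each bracket add up to $m$,
\[
P(m,h)\le [w^q]_{A_q}^{(m-h)\max\{1,\frac{1}{q-1}\}},\qquad Q(m,h)\le [w^p]_{A_p}^{h\max\{1,\frac{1}{p-1}\}}.
\]
Now I would invoke \eqref{eq:ApAqApq} together with the elementary inequalities $\max\{1,\frac{1}{q-1}\}\le\max\{1,\frac{p'}{q}\}$ (true because $q'<p'$ forces $\frac{1}{q-1}=\frac{q'}{q}<\frac{p'}{q}$) and $\frac{p}{q}\max\{1,\frac{1}{p-1}\}=\max\{\frac{p}{q},\frac{p'}{q}\}\le\max\{1,\frac{p'}{q}\}$ (true because $p<q$), and $[w]_{A_{p,q}}\ge1$, to get $P(m,h)Q(m,h)\le[w]_{A_{p,q}}^{m\max\{1,p'/q\}}$. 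Hence each summand of $\kappa_m$ is at most $\binom{m}{h}[w]_{A_{p,q}}^{(m+1-\frac{\alpha}{n})\max\{1,p'/q\}}$, and summing over $h$ turns the harmless factor $2^m$ into part of the constant, which is \eqref{Cor1}.

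It then remains to prove that the exponent $(m+1-\frac{\alpha}{n})\max\{1,\frac{p'}{q}\}$ is best possible. For $m=0$ this is the sharpness in Theorem~\ref{Thm:LMPT}, and for $m=1$ it is the sharpness in Theorem~\ref{Thm:CUM}. For $m\ge 2$ I would adapt the extremizing families used there: the symbol $b(x)=\log|x|\in\BMO$ (with $\|b\|_{\BMO}$ a dimensional constant), power weights $w_\delta$ normalized so that $w_\delta\in A_{p,q}$ with $[w_\delta]_{A_{p,q}}\simeq\delta^{-1}$ as $\delta\to0^+$, and the power-type test functions $f_\delta$ of \cite{LMPT,CUM} supported near the origin. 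The key computation is that near the origin each iteration of the commutator with $b$ inserts an extra factor comparable to $\log\frac{1}{|x|}$, so that integrating $\bigl(\log\frac{1}{|x|}\bigr)^{m}$ against $w_\delta^{q}$ over a small ball yields $m$ further powers of $\delta^{-1}$ beyond the one already present for $I_\alpha f_\delta$; this gives
\[
\frac{\|(I_\alpha)_b^m f_\delta\|_{L^{q}(w_\delta^{q})}}{\|b\|_{\BMO}^{m}\,\|f_\delta\|_{L^{p}(w_\delta^{p})}}\gtrsim\delta^{-(m+1-\frac{\alpha}{n})\max\{1,\frac{p'}{q}\}}\simeq[w_\delta]_{A_{p,q}}^{(m+1-\frac{\alpha}{n})\max\{1,\frac{p'}{q}\}},
\]
ruling out any smaller exponent. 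As for $m=1$, the regimes $p'\ge q$ and $p'<q$ must be treated separately, choosing in each the matching pair (weight, test function) so that the competing powers of $\delta^{-1}$ in $\|f_\delta\|_{L^{p}(w_\delta^{p})}$ do not absorb the gain; in the second regime one may instead dualize.

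The main obstacle is precisely this last step. The sufficiency and necessity are only bookkeeping on top of Theorem~\ref{thm:BloomFrac}, but the sharpness for general $m$ requires controlling the interaction between the $m$ logarithmic factors produced by the iterated commutator and the power weight near the origin, verifying that the $\max\{1,p'/q\}$ dichotomy survives the iteration with the exponent growing additively in $m$, and checking the admissibility of the extremizers (local integrability of $bf_\delta$, well-definedness of all the iterates, and stability of the $\delta$-asymptotics after the truncations that keep $f_\delta$ compactly supported).
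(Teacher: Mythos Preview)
Your derivation of \eqref{Cor1} and of the necessity from Theorem~\ref{thm:BloomFrac} is correct and identical to the paper's: the same collapse of $\kappa_m$ under $\mu=\lambda=w$, the same use of \eqref{eq:ApAqApq}, and the same reduction of $\max\{1,\tfrac{1}{q-1},\tfrac{p}{q},\tfrac{p'}{q}\}$ to $\max\{1,\tfrac{p'}{q}\}$.

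For the sharpness your overall plan is also the paper's: treat $p'/q\ge 1$ directly with the Buckley-type family $w_\delta(x)=|x|^{(n-\delta)/p'}$, $f_\delta=|x|^{\delta-n}\chi_{B(0,1)}$, $b(x)=\log|x|$, and handle $p'/q<1$ by duality using $[(I_\alpha)_b^m]^*=(-1)^m(I_\alpha)_b^m$ and $[w^{-1}]_{A_{q',p'}}=[w]_{A_{p,q}}^{p'/q}$. The one point where your sketch drifts from the actual argument is the heuristic about ``near the origin'': the paper's lower bound is computed for $|x|\ge 2$, not on a small ball. After the change of variables $y=|x|z$ one is left with
\[
(I_\alpha)_b^m f_\delta(x)\ \gtrsim\ \frac{|x|^{\delta}}{(1+|x|)^{n-\alpha}}\int_0^{1/|x|}\log^m(1/r)\,r^{\delta-1}\,dr,
\]
and the $m$ extra powers of $\delta^{-1}$ come from integrating this last integral by parts, giving $\int_0^{1/|x|}\log^m(1/r)r^{\delta-1}dr\ge \delta^{-m-1}|x|^{-\delta}$. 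This yields $(I_\alpha)_b^m f_\delta(x)\gtrsim \delta^{-(m+1)}|x|^{-(n-\alpha)}$ for $|x|\ge 2$, and then $\|(I_\alpha)_b^m f_\delta\|_{L^q(w_\delta^q)}\gtrsim \delta^{-(m+1)-1/q}$, which together with $[w_\delta]_{A_{p,q}}\asymp\delta^{-q/p'}$ and $\|f_\delta\|_{L^p(w_\delta^p)}\asymp\delta^{-1/p}$ gives exactly the claimed exponent. So the obstacles you list (local integrability, well-definedness of the iterates, interaction of logarithms with the weight near the origin) do not in fact arise: the computation is explicit and takes place away from the support of $f_\delta$.
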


In the case $m=0$ the preceding result is due to Lacey, Moen, P\'erez and Torres \cite{LMPT}. The case $m=1$ was settled in \cite{CUM} but using a different proof based on a suitable combination of the so called conjugation method, that was introduced in \cite{CRW} (see \cite{CPP} for the first application of the method to obtain sharp constants), and an extrapolation argument. The case $m>1$ was recently established in \cite{BMMST} also relying upon the conjugation method. We observe that Corollary \ref{cor:oneweight} provides a new proof of the results in \cite{CUM,BMMST}. Additionally we settle the sharpness of the iterated case and provide a new characterization of $\BMO$ in terms of iterated commutators. The preceding result combined with the characterization recently obtained in \cite{LORR2} allows to connect the boundedness of commutators of singular integrals and of commutators of fractional integrals. For instance, if $R_j$ is any Riesz transform the following statement holds:
\[(I_\alpha)^m_b:L^q(\mathbb{R}^n)\rightarrow L^p(\mathbb{R}^n) \iff (R_j)^m_b:L^p(\mathbb{R}^n)\rightarrow L^p(\mathbb{R}^n).\]
The remainder of the paper is organized as follows. In Section \ref{Sec:Sparse} we will present and  establish the pointwise sparse domination result on which we will rely to prove Theorem \ref{thm:BloomFrac} and in Section \ref{Sec:ThmBFCor}  we provide  proofs of Theorem \ref{thm:BloomFrac} and Corollary \ref{cor:oneweight}. We end up this paper with some remarks regarding mixed estimates involving the $A_\infty$ constant.

\section{A sparse domination result for iterated commutators of fractional integrals}\label{Sec:Sparse}

We begin this section recalling the definitions of the dyadic structures we will rely upon. These definitions and a profound treatise on dyadic calculus can be found in \cite{LN}.

Given a cube $Q\subset {\mathbb R}^n$, we denote by ${\mathcal D}(Q)$ the family of all dyadic cubes with respect to $Q$, that is, the cubes obtained subdividing repeatedly $Q$ and each of its descendants into $2^n$ subcubes of the same sidelength.

Given a family of cubes  $\mathcal{D}$, we say that it is a dyadic lattice if it satisfies the following properties:
\begin{enumerate}
\item If $Q\in\mathcal{D}$, then $\mathcal{D}(Q)\subset\mathcal{D}$.
\item For every pair of cubes $Q',Q''\in \mathcal{D}$ there exists a common ancestor, namely, we can find $Q\in\mathcal{D}$ such that $Q',Q''\in {\mathcal D}(Q)$.
\item For every compact set $K\subset {\mathbb R}^n$, there exists a cube $Q\in \mathcal{D}$ such that $K\subset Q$.
\end{enumerate}

Given a dyadic lattice $\mathcal{D}$ we say that a family $\mathcal{S}\subset\mathcal{D}$ is an $\eta$-sparse family with $\eta\in(0,1)$ if there exists a family $\{E_Q\}_{Q\in\mathcal{S}}$ of pairwise disjoint measurable sets such that, for any $Q\in\mathcal{S}$, the set $E_Q$ is contained in $Q$ and satisfies $\eta |Q|\leq |E_Q|$.

 Since the first simplification of the proof of the $A_2$ theorem provided by Lerner \cite{LA2}, sparse domination theory has experienced a fruitful and fast development. However in the case of fractional integrals, the sparse domination philosophy, via dyadic discretizations of the operator, had been already implicitly exploited in \cite{SW}, \cite{P}, and a dyadic type expression for commutators had also shown up in \cite{CUM}. We remit the reader to \cite{CU} for a more detailed insight on the topic.

Relying upon ideas in \cite{IFRR} and \cite{LORR}, it is possible to obtain a pointwise sparse domination that covers the case of iterated commutators of fractional integrals. The precise statement is the following.

\begin{thm}\label{acsparse}
Let $0<\alpha<n$. Let $m$ be a non-negative integer. For every  $f\in\mathcal{C}_{c}^{\infty}(\mathbb{R}^{n})$ and $b\in L_{\text{loc }}^{m}(\mathbb{R}^{n})$,
there exist a family $\{\mathcal{D}_j\}_{j=1}^{3^{n}}$ of dyadic lattices and a family $\{\mathcal{S}_j\}_{j=1}^{3^n}$ of sparse
families such that $\mathcal{S}_{j}\subset\mathcal{D}_{j}$, for each $j$, and 
\[
|(I_{\alpha})_{b}^{m}f(x)|\leq c_{n,m,\alpha}\sum_{j=1}^{3^{n}}\sum_{h=0}^{m}\binom{m}{h}\mathcal{A}_{\alpha,\mathcal{S}_{j}}^{m,h}(b,f)(x),\qquad \text{a.e. } x\in\mathbb{R}^n,
\]
where, for a sparse family $\mathcal{S}$, $\mathcal{A}_{\alpha,\mathcal{S}}^{m,h}(b,\cdot)$ is the sparse operator given by
\[
\mathcal{A}_{\alpha,\mathcal{S}}^{m,h}(b,f)(x)=\sum_{Q\in\mathcal{S}}|b(x)-b_{Q}|^{m-h}|Q|^{\frac{\alpha}{n}}|f(b-b_{Q})^{h}|_{Q}\chi_{Q}(x).
\]
\end{thm}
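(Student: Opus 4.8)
The plan is to run a Lerner-type stopping-time argument, in the spirit of the sparse dominations for commutators in \cite{IFRR,LORR} but adapted to the fractional integral. Since $f\in\mathcal{C}_{c}^{\infty}(\mathbb{R}^{n})$ and $b\in L_{\mathrm{loc}}^{m}(\mathbb{R}^{n})$, all the quantities below are finite a.e., so that by now-standard reductions (a limiting argument exhausting $\mathbb{R}^{n}$ and the three lattice theorem) it is enough to fix one dyadic lattice $\mathcal{D}$, a cube $Q_{0}\in\mathcal{D}$ containing $\supp f$, and to construct a $\frac{1}{2}$-sparse family $\mathcal{F}\subset\mathcal{D}(Q_{0})$ so that, for a.e.\ $x\in Q_{0}$,
\[
|(I_{\alpha})_{b}^{m}(f\chi_{Q_{0}})(x)|\le c_{n,m,\alpha}\sum_{h=0}^{m}\binom{m}{h}\sum_{Q\in\mathcal{F}}|b(x)-b_{Q}|^{m-h}\,|3Q|^{\frac{\alpha}{n}}\,|f(b-b_{Q})^{h}|_{3Q}\,\chi_{Q}(x).
\]
As $|3Q|\simeq|Q|$, absorbing the dilations $3Q$ into cubes of $3^{n}$ shifted dyadic lattices $\mathcal{D}_{j}$ then produces the families $\mathcal{S}_{j}$ and the asserted inequality. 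The two basic tools are the elementary identity, valid for every constant $c$,
\[
(I_{\alpha})_{b}^{m}g(x)=\sum_{h=0}^{m}(-1)^{h}\binom{m}{h}(b(x)-c)^{m-h}I_{\alpha}\bigl((b-c)^{h}g\bigr)(x),
\]
and the grand maximal truncated operator $\mathcal{M}_{I_{\alpha}}\varphi(x)=\sup_{Q\ni x}\esssup_{\xi\in Q}|I_{\alpha}(\varphi\chi_{\mathbb{R}^{n}\setminus3Q})(\xi)|$, which, by splitting into dyadic annuli, satisfies the weak $(1,\frac{n}{n-\alpha})$ estimate with the natural scaling, exactly as in the non-fractional case.

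Applying the identity with $c=b_{Q_{0}}$ reduces the local claim to controlling $I_{\alpha}((b-b_{Q_{0}})^{h}f\chi_{Q_{0}})(x)$ on $Q_{0}$ for each $h$, with the pointwise factors $(b(x)-b_{Q_{0}})^{m-h}$ carried along. I would build $\mathcal{F}$ by generations, $\mathcal{F}_{0}=\{Q_{0}\}$: given $Q\in\mathcal{F}_{k}$, take for $\mathcal{F}_{k+1}(Q)$ the maximal cubes $Q'\in\mathcal{D}(Q)$, $Q'\subsetneq Q$, for which, for some $0\le j\le m$, one of the $3Q'$-averages $|f(b-b_{Q})^{j}|_{3Q'}$, $|(b-b_{Q})^{j}|_{3Q'}$, or the value of $\mathcal{M}_{I_{\alpha}}((b-b_{Q})^{j}f\chi_{3Q})$ on $Q'$, exceeds a large multiple $A=A(n,m,\alpha)$ of the corresponding quantity over $3Q$. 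The weak-type bounds for the uncentered maximal operator and for $\mathcal{M}_{I_{\alpha}}$, together with Chebyshev, force $\sum_{Q'\in\mathcal{F}_{k+1}(Q)}|Q'|\le\frac{1}{2}|Q|$, so that $\mathcal{F}=\bigcup_{k}\mathcal{F}_{k}$ is $\frac{1}{2}$-sparse with $E_{Q}=Q\setminus\bigcup_{Q'\in\mathcal{F}_{k+1}(Q)}Q'$. Splitting $I_{\alpha}((b-b_{Q})^{h}f\chi_{3Q})$ into a local part, which on $E_{Q}$ is dominated by $|3Q|^{\alpha/n}|f(b-b_{Q})^{h}|_{3Q}$ through the average and $\mathcal{M}_{I_{\alpha}}$ conditions at $Q$, and a part supported on $\bigcup Q'$, which is absorbed into generation $k+1$ up to an $\mathcal{M}_{I_{\alpha}}$-error again controlled by the same $3Q$-quantity, yields the $Q$-term of the target sum on $E_{Q}$. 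Descending into a stopping cube $Q'$, I would re-expand around $c=b_{Q'}$ via $(b-b_{Q})^{h}=\sum_{i}\binom{h}{i}(b-b_{Q'})^{i}(b_{Q'}-b_{Q})^{h-i}$ (and likewise for $(b(x)-b_{Q})^{m-h}$); since the dyadic parent of $Q'$ is not a stopping cube one has $|b_{Q'}-b_{Q}|\le c_{n}A\,|b-b_{Q}|_{3Q}$, and the total degree in the $b$-factors is preserved, so that only boundedly many descents create a new such factor. Iterating over all generations and collecting terms then produces the displayed bound.

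The hard part will be the combinatorial bookkeeping in this last recursion. Each descent produces, besides a term of the expected type at $Q'$, cross-terms carrying products of averaged oscillations of $b$ over the intervening cubes and bounded combinatorial factors; since $m$ is fixed these have bounded total degree, but reorganizing the entire collection of such terms---over all generations and all auxiliary indices---into the single sum $\sum_{Q\in\mathcal{F}}\sum_{h}\binom{m}{h}|b(x)-b_{Q}|^{m-h}|3Q|^{\alpha/n}|f(b-b_{Q})^{h}|_{3Q}\chi_{Q}(x)$, with a constant depending only on $n,m,\alpha$ and not on the depth of the stopping tree, is the delicate point. This is precisely the accounting carried out in \cite{IFRR,LORR} in the Calder\'on--Zygmund setting; the fractional modifications only affect the scaling exponents. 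The two remaining ingredients---the weak $(1,n/(n-\alpha))$ bound for $\mathcal{M}_{I_{\alpha}}$ and the three lattice reduction---are by now standard (cf.\ \cite{LN}).
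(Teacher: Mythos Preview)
Your toolkit is right---the Newton-type expansion, the weak $(1,n/(n-\alpha))$ bound for the grand maximal truncation $\mathcal{M}_{I_\alpha}$, and the three-lattice reduction are exactly what the paper uses---but you have organised the recursion in a way that manufactures the ``delicate bookkeeping'' you then worry about. In the paper's argument (and in \cite{IFRR,LORR}, which you invoke for precisely this step) that bookkeeping never appears.

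The structural point is this. You expand $(I_\alpha)_b^m(f\chi_{3Q_0})$ around $b_{Q_0}$ \emph{first} and then try to recurse on each term $I_\alpha\bigl((b-b_{Q_0})^h f\chi_{3Q_0}\bigr)$; descending into a child $Q'$ then forces a re-centering and the cross-terms $(b_{Q'}-b_{Q})^{h-i}$ you describe. The paper instead uses the linearity of the \emph{full} commutator in $f$ to split \emph{before} expanding: for any disjoint family $\{P_j\}\subset\mathcal{D}(Q_0)$,
\[
(I_\alpha)_b^m(f\chi_{3Q_0})\chi_{P_j}=(I_\alpha)_b^m(f\chi_{3Q_0\setminus 3P_j})\chi_{P_j}+(I_\alpha)_b^m(f\chi_{3P_j})\chi_{P_j}.
\]
Only the first summand, together with the piece on $Q_0\setminus\bigcup_j P_j$, is then expanded around $b_{R_{Q_0}}$ and controlled via the stopping set
\[
E=\bigcup_{h=0}^m\Bigl\{x\in Q_0:\mathcal{M}_{I_\alpha,Q_0}\bigl((b-b_{R_{Q_0}})^h f\bigr)(x)>c_{n,m,\alpha}\,|3Q_0|^{\alpha/n}\,|(b-b_{R_{Q_0}})^h f|_{3Q_0}\Bigr\}
\]
and a Calder\'on--Zygmund decomposition of $\chi_E$. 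The second summand is passed \emph{intact} to the next iteration. Since $(I_\alpha)_b^m=(I_\alpha)_{b-c}^m$ for every constant $c$, at the next stage one is free to expand around the new center $b_{R_{P_j}}$ from scratch, and no cross-terms are ever produced; the sparse sum assembles by direct summation over generations with a constant independent of depth.

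As a by-product, the only stopping condition needed is on $\mathcal{M}_{I_\alpha,Q_0}\bigl((b-b_{R_{Q_0}})^h f\bigr)$ for $h=0,\dots,m$; your additional conditions on $|f(b-b_Q)^j|_{3Q'}$ and $|(b-b_Q)^j|_{3Q'}$ are unnecessary (the former is already absorbed through the pointwise bound $|I_\alpha(g\chi_{3Q_0})|\le\mathcal{M}_{I_\alpha,Q_0}g$). Your attribution of the cross-term accounting to \cite{IFRR,LORR} is therefore misplaced: those papers already run the clean recursion above, so the combinatorics you defer to them are not actually carried out there.
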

To establish the preceding theorem we need to prove that the grand maximal truncated operator $\mathcal{M}_{I_{\alpha}}$ defined
by 
\[
\mathcal{M}_{I_{\alpha}}f(x)=\sup_{Q\ni x}\underset{{\scriptscriptstyle \xi\in Q}}{\esssup}\left|I_{\alpha}(f\chi_{\mathbb{R}^{n}\setminus3Q})(\xi)\right|,
\]
where the supremmum is taken over all the cubes $Q\subset\mathbb{R}^{n}$
containing $x$, maps $L^1(\mathbb{R}^n)$ to $L^{\frac{n}{n-\alpha},\infty}(\mathbb{R}^n)$. 
We will also use a local version of this operator which is defined, for a cube $Q_0\subset \mathbb{R}^n$, as
\[
\mathcal{M}_{I_{\alpha},Q_{0}}f(x)=\sup_{x\in Q\subset Q_{0}}\underset{{\scriptscriptstyle \xi\in Q}}{\esssup}\left|I_{\alpha}(f\chi_{3Q_{0}\setminus3Q})(\xi)\right|.
\]

\subsection{Lemmata}
The purpose of this subsection is to provide two lemmas that will be needed to establish Theorem \ref{acsparse}. We start presenting the first of them. 
\begin{lem}
\label{LemmaTec}Let $0<\alpha<n$. Let $Q_0\subset\mathbb{R}^n$ be a cube. The following pointwise estimates
hold:
\begin{enumerate}
\item\label{lema:1} For a.e. $x\in Q_{0}$,
\[
|I_{\alpha}(f\chi_{3Q_{0}})(x)|\leq\mathcal{M}_{I_{\alpha},Q_{0}}f(x).
\]
\item\label{lema:2} For all $x\in\mathbb{R}^{n}$
\[
\mathcal{M}_{I_{\alpha}}f(x)\leq c_{n,\alpha}\left(M_{\alpha}f(x)+I_{\alpha}|f|(x)\right).
\]
From this last estimate it follows that $\mathcal{M}_{I_{\alpha}}$ is bounded from $L^1(\mathbb{R}^n)$ to $L^{\frac{n}{n-\alpha},\infty}(\mathbb{R}^n)$. 
\end{enumerate}
\end{lem}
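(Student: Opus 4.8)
For part \eqref{lema:1}, the plan is to exploit the self-improving nature of the essential supremum in the definition of $\mathcal{M}_{I_\alpha,Q_0}$. Fix a Lebesgue point $x\in Q_0$ of $I_\alpha(f\chi_{3Q_0})$. I would like to write $I_\alpha(f\chi_{3Q_0})=I_\alpha(f\chi_{3Q_0\setminus 3Q})+I_\alpha(f\chi_{3Q})$ for cubes $Q\ni x$ with $Q\subset Q_0$, and then shrink $Q$ down to $x$: as $\ell(Q)\to 0$, the term $I_\alpha(f\chi_{3Q})(\xi)$ tends to $0$ for a.e.\ $\xi$ near $x$ because $I_\alpha$ of an $L^1$-small piece is small in measure (the kernel $|x-y|^{\alpha-n}$ is locally integrable), so by choosing a sequence $Q_k\ni x$ with $|Q_k|\to 0$ along which $\mathop{\mathrm{ess\,sup}}_{\xi\in Q_k}|I_\alpha(f\chi_{3Q_k})(\xi)|\to |I_\alpha(f\chi_{3Q_0})(x)|$, one deduces that $|I_\alpha(f\chi_{3Q_0})(x)|\le \mathop{\mathrm{ess\,sup}}_{\xi\in Q_k}|I_\alpha(f\chi_{3Q_0\setminus 3Q_k})(\xi)|+o(1)\le \mathcal{M}_{I_\alpha,Q_0}f(x)+o(1)$. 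This is essentially the Lerner--Nazarov style argument and should go through; the one point to be careful about is justifying the a.e.\ convergence of the truncated pieces, which follows from weak-type bounds for $I_\alpha$ on $L^1$.

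For part \eqref{lema:2}, fix $x\in\mathbb{R}^n$ and a cube $Q\ni x$, and estimate $|I_\alpha(f\chi_{\mathbb{R}^n\setminus 3Q})(\xi)|$ for $\xi\in Q$. The key geometric observation is that if $y\notin 3Q$ and $\xi\in Q$ then $|\xi-y|\sim |x-y|$ with constants depending only on $n$, so
\[
|I_\alpha(f\chi_{\mathbb{R}^n\setminus 3Q})(\xi)|\le c_n\int_{\mathbb{R}^n\setminus 3Q}\frac{|f(y)|}{|x-y|^{n-\alpha}}\,dy.
\]
Then I would split the remaining integral into dyadic annuli $3^{k+1}Q\setminus 3^k Q$, $k\ge 1$: on each annulus $|x-y|^{\alpha-n}\sim (3^k\ell(Q))^{\alpha-n}\sim |3^kQ|^{\frac{\alpha}{n}-1}$, so the $k$-th term is bounded by $c_n|3^kQ|^{\frac{\alpha}{n}-1}\int_{3^{k+1}Q}|f|\le c_n 3^{(\alpha-n)k}\,\frac{1}{|3^{k+1}Q|^{1-\frac{\alpha}{n}}}\int_{3^{k+1}Q}|f|\le c_n 3^{(\alpha-n)k}M_\alpha f(x)$, and summing the geometric series in $k$ gives the bound $c_{n,\alpha}M_\alpha f(x)$. (Alternatively, and even more simply, one bounds $\int_{\mathbb{R}^n\setminus 3Q}|f(y)||x-y|^{\alpha-n}\,dy\le I_\alpha|f|(x)$ directly, which already yields the stated inequality with the $M_\alpha$ term being a bonus; I would present whichever is cleaner, noting both halves appear in the displayed estimate.) Taking the supremum over $Q\ni x$ yields $\mathcal{M}_{I_\alpha}f(x)\le c_{n,\alpha}(M_\alpha f(x)+I_\alpha|f|(x))$.

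Finally, the boundedness of $\mathcal{M}_{I_\alpha}$ from $L^1$ to $L^{\frac{n}{n-\alpha},\infty}$ is immediate from the pointwise bound just proved together with the classical weak-type $(1,\frac{n}{n-\alpha})$ estimates for $M_\alpha$ and for $I_\alpha$ itself (both standard, e.g.\ from \cite[Chapter 5]{S}): the sum of two functions in $L^{\frac{n}{n-\alpha},\infty}$ lies in $L^{\frac{n}{n-\alpha},\infty}$ with a controlled quasinorm. I expect part \eqref{lema:1} to be the main obstacle, since it is the only place requiring a genuine limiting/a.e.\ argument rather than a direct pointwise kernel estimate; part \eqref{lema:2} is routine annular decomposition.
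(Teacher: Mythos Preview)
Your proposal is essentially correct, and for part~\eqref{lema:2} it is genuinely simpler than the paper's argument, so a brief comparison is worthwhile.

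For part~\eqref{lema:1} you and the paper follow the same split-and-shrink idea, but the paper's execution is cleaner: it evaluates everything at the fixed point $x$ (never at a variable $\xi$), obtains the direct pointwise bound $|I_\alpha(f\chi_{3Q(x,s)})(x)|\leq C_{n,\alpha}\,s^\alpha Mf(x)$ via a dyadic annular decomposition of $3Q(x,s)$, and then lets $s\to 0$. This avoids the Lebesgue-point/weak-type detour you sketch. In particular, your sentence about choosing $Q_k$ so that $\esssup_{\xi\in Q_k}|I_\alpha(f\chi_{3Q_k})(\xi)|\to |I_\alpha(f\chi_{3Q_0})(x)|$ is not what is actually needed; the relevant fact is simply $|I_\alpha(f\chi_{3Q_k})(x)|\to 0$, which the bound $C s^\alpha Mf(x)$ gives immediately for a.e.\ $x$.

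For part~\eqref{lema:2} your route differs from the paper's and is more elementary. You use directly that $|\xi-y|\sim_n |x-y|$ whenever $\xi,x\in Q$ and $y\notin 3Q$, recenter the kernel at $x$, and obtain $\mathcal{M}_{I_\alpha}f(x)\leq c_{n,\alpha}I_\alpha|f|(x)$ in one line (your annular sum also gives the $M_\alpha$ term). The paper instead introduces an auxiliary ball $B_x\supset 3Q$, splits $I_\alpha(f\chi_{\mathbb{R}^n\setminus 3Q})(\xi)$ into the difference $I_\alpha(f\chi_{\mathbb{R}^n\setminus B_x})(\xi)-I_\alpha(f\chi_{\mathbb{R}^n\setminus B_x})(x)$ (bounded by $M_\alpha f(x)$ via the mean value theorem on the kernel), the local piece $I_\alpha(f\chi_{B_x\setminus 3Q})(\xi)$ (also $\lesssim M_\alpha f(x)$), and the centered tail $|I_\alpha(f\chi_{\mathbb{R}^n\setminus B_x})(x)|\leq I_\alpha|f|(x)$. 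Both arguments yield the same pointwise inequality; yours is shorter, while the paper's mirrors the classical grand-maximal argument for Calder\'on--Zygmund operators more closely. The concluding weak-type $(1,\tfrac{n}{n-\alpha})$ boundedness is handled identically in both.
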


\begin{proof}[Proof of Lemma \ref{LemmaTec}]
To prove \eqref{lema:1}, let $Q(x,s)$ be a cube centered at $x$
and such that $Q(x,s)\subset Q_{0}$. Then,
\begin{equation}\label{lema2}
\begin{split}|I_{\alpha}(f\chi_{3Q_{0}})(x)| & \leq|I_{\alpha}(f\chi_{3Q(x,s)})(x)|+|I_{\alpha}(f\chi_{3Q_{0}\setminus3Q(x,s)})(x)|\\
 & \leq|I_{\alpha}(f\chi_{3Q(x,s)})(x)|+\mathcal{M}_{I_{\alpha},Q_{0}}f(x)\\
 & \leq C_{n,\alpha}s^{\alpha}Mf(x)+\mathcal{M}_{I_{\alpha},Q_{0}}f(x),
\end{split}
\end{equation}
where the last estimate for the first term follows by standard computations involving a dyadic annuli-type decomposition of the cube $Q(x,s)$. 
The estimate in \eqref{lema:1} is then settled letting $s\rightarrow0$ in \eqref{lema2}.

For the proof of the pointwise inequality in \eqref{lema:2}, let $x$ be a point in $\mathbb{R}^n$ and $Q$ a cube containing $x$. Denote by $B_{x}$
the closed ball centered at $x$ of radius $2\operatorname{diam} Q$. Then
$3Q\subset B_{x}$, and, for every $\xi\in Q$ we obtain 
\[
\begin{split}|I_{\alpha}(f\chi_{\mathbb{R}^{n}\setminus3Q})(\xi)| & =|I_{\alpha}(f\chi_{\mathbb{R}^{n}\setminus B_{x}})(\xi)+I_{\alpha}(f\chi_{B_{x}\setminus3Q})(\xi)|\\
 & \leq |I_{\alpha}(f\chi_{\mathbb{R}^{n}\setminus B_{x}})(\xi)-I_{\alpha}(f\chi_{\mathbb{R}^{n}\setminus B_{x}})(x)|\\
 & +|I_{\alpha}(f\chi_{B_{x}\setminus3Q})(\xi)|+|I_{\alpha}(f\chi_{\mathbb{R}^{n}\setminus B_{x}})(x)|.
\end{split}
\]
For the first term, by using the mean value theorem and adapting \cite[Theorem 2.1.10]{GCFA} to our setting, we get
\[
\begin{split}  |I_{\alpha}(f\chi_{\mathbb{R}^{n}\setminus B_{x}})(\xi)-I_{\alpha}(f\chi_{\mathbb{R}^{n}\setminus B_{x}})(x)|
 & \leq\int_{\mathbb{R}^{n}\setminus B_{x}}\left|\frac{1}{|y-\xi|^{n-\alpha}}-\frac{1}{|y-x|^{n-\alpha}}\right||f(y)|dy\\
 & \leq c_{n,\alpha}\int_{\mathbb{R}^{n}\setminus B_{x}}\frac{|x-\xi|}{(|x-y|+|y-\xi|)^{n-\alpha+1}}|f(y)|dy\\
 & \leq c_{n,\alpha}M_{\alpha}f(x).
\end{split}
\]
For the second term, taking into account the definition of $B_x$, we can write
\[
\begin{split}|I_{\alpha}(f\chi_{B_{x}\setminus3Q})(\xi)| & =\left|\int_{B_{x}\setminus3Q}\frac{1}{|y-\xi|^{n-\alpha}}f(y)dy\right|\\
 & \leq\int_{B_{x}\setminus3Q}\frac{1}{|y-\xi|^{n-\alpha}}\left|f(y)\right|dy\\
 & \leq c_{n,\alpha}\frac{1}{\ell(Q)^{n-\alpha}}\int_{B_{x}}|f(y)|dy\\
 & \leq c_{n,\alpha}M_{\alpha}f(x).
\end{split}
\]
To end the proof of this pointwise estimate we observe that 
\[
|I_{\alpha}(f\chi_{\mathbb{R}^{n}\setminus B_{x}})(x)|\leq I_{\alpha}|f|(x),
\]
which finishes the proof of \eqref{lema:2}.
Now, taking into account the pointwise estimate we have just obtained, and the boundedness properties of the operators $I_{\alpha}$ and $M_\alpha$, it is clear that $\mathcal{M}_{I_{\alpha}}$ is bounded from $L^1(\mathbb{R}^n)$ to $L^{\frac{n}{n-\alpha},\infty}(\mathbb{R}^n)$, and we are done.
\end{proof}

The second lemma that we will need for the proof of Theorem \ref{acsparse} is the so called $3^n$ dyadic lattices trick. This result was established in \cite{LN} and essentially says that given a dyadic lattice $\mathcal{D}$, if we consider the family of cubes $\{3Q\,:\,Q\in\mathcal{D}\}$ it is possible to arrange them in $3^n$ dyadic lattices. 
\begin{lem}
Given a dyadic lattice $\mathcal{D}$ there exist $3^{n}$ dyadic
lattices $\mathcal{D}_{j}$ such that 
\[
\{3Q\,:\,Q\in\mathcal{D}\}=\bigcup_{j=1}^{3^{n}}\mathcal{D}_{j}
\]
and for every cube $Q\in\mathcal{D}$ we can find a cube $R_{Q}$
in each $\mathcal{D}_{j}$ such that $Q\subset R_{Q}$ and $3\ell({Q})=\ell({R_{Q}})$
\end{lem}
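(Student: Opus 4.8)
The plan is to reduce the statement to dimension one, construct the required one‑dimensional lattices by a scale‑dependent three‑colouring of the dilated intervals, and then obtain the $3^n$ lattices as products.

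For the reduction, after rescaling I may assume that the sidelengths occurring in $\mathcal D$ are exactly the powers of $2$; write $\mathcal D_k$ for the cubes of $\mathcal D$ of sidelength $2^k$. Each $\mathcal D_k$ tiles $\mathbb R^n$, so the lower‑left vertices of its cubes form a grid $\beta_k+2^k\mathbb Z^n=\prod_{i=1}^n\bigl(\beta_k^{(i)}+2^k\mathbb Z\bigr)$, and the refinement of $\mathcal D_k$ inside $\mathcal D_{k+1}$ is coordinatewise; hence $\mathcal D=\mathcal D^{(1)}\otimes\cdots\otimes\mathcal D^{(n)}$ for suitable one‑dimensional dyadic lattices $\mathcal D^{(i)}$, each having exactly the scales $\{2^k\}$. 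If I can produce, for each $i$, three one‑dimensional dyadic lattices $\mathcal D^{(i)}_1,\mathcal D^{(i)}_2,\mathcal D^{(i)}_3$ with the asserted properties relative to $\mathcal D^{(i)}$, then the $3^n$ product lattices $\bigotimes_i\mathcal D^{(i)}_{t_i}$, indexed by $(t_1,\dots,t_n)\in\{1,2,3\}^n$, will work for $\mathcal D$: dilation by $3$ and all the containment and sidelength relations tensorize, and $\{3Q:Q\in\mathcal D\}=\bigcup_{(t_i)}\bigotimes_i\mathcal D^{(i)}_{t_i}$. So I may assume $n=1$.

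Fix $k$. The intervals $\{3I:I\in\mathcal D_k\}$ all have length $3\cdot2^k$, and their left endpoints run over an arithmetic progression of step $2^k$; its three residue classes modulo $3\cdot2^k$ split $\{3I:I\in\mathcal D_k\}$ into three subfamilies, each of which tiles $\mathbb R$ because its left endpoints are spaced by exactly the common length $3\cdot2^k$. Label these classes by $r\in\{0,1,2\}$. To glue the classes across scales into dyadic lattices I must pick, for each $k$, a label $r_k$ so that the scale‑$k$ class‑$r_k$ tiling refines the scale‑$(k+1)$ class‑$r_{k+1}$ tiling; writing $\beta_{k+1}=\beta_k+2^k\delta_k$ with $\delta_k\in\{0,1\}$, a direct computation turns this requirement into the linear congruence $r_k\equiv\delta_k-1-r_{k+1}\pmod 3$. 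Since $x\mapsto\delta_k-1-x$ is a bijection of $\mathbb Z/3\mathbb Z$, this recurrence propagates uniquely in both directions; starting from the three distinct values $r_0=0,1,2$ I obtain sequences $(r^{(j)}_k)_k$, $j=1,2,3$, which remain a permutation of $\{0,1,2\}$ at every scale. Letting $\mathcal D_j$ be the union over $k$ of the class‑$r^{(j)}_k$ tiling at scale $k$ produces three pairwise disjoint dyadic lattices whose union is exactly $\{3I:I\in\mathcal D\}$. For the covering property, fix $I\in\mathcal D_k$: the intervals $3J$ with $J\in\mathcal D_k$ containing $I$ are precisely those for which the left endpoint of $J$ differs from that of $I$ by $-2^k$, $0$, or $2^k$, and the corresponding left endpoints of $3J$ are three consecutive points of the step‑$2^k$ progression, hence lie in the three distinct cosets modulo $3\cdot2^k$; so exactly one of these dilates lies in each $\mathcal D_j$, it has sidelength exactly $3\ell(I)$, and it serves as $R_I$.

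The main obstacle is not the colouring itself — that is clean once the congruence above is written down — but the bookkeeping needed to verify that each family assembled from nested coset tilings genuinely satisfies all the axioms of a dyadic lattice (closure under dyadic subdivision, existence of common ancestors, and absorption of every compact set), and likewise that the $n$‑dimensional products inherit these axioms. These will follow from the corresponding properties of $\mathcal D$ together with the elementary observations that $I\subset 3I$ and that $I'\subset I$ forces $3I'\subset 3I$, but carrying out that verification, and carefully tracking the offsets $\delta_k$ through the recurrence and through the product decomposition, is where the real work lies.
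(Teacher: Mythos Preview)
The paper does not give a proof of this lemma; it is quoted as the ``$3^n$ dyadic lattices trick'' from Lerner--Nazarov \cite{LN} and used as a black box, so there is nothing in the paper itself to compare your argument against.

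As an independent proof, your plan is sound. The tensor reduction to $n=1$ is legitimate: the three axioms force each generation of $\mathcal D$ to tile $\mathbb R^n$ on a grid $\beta_k+2^k\mathbb Z^n$, and since dyadic refinement acts coordinatewise the lattice factors as a product. Your one-dimensional three-colouring by cosets and the recurrence $r_k\equiv\delta_k-1-r_{k+1}\pmod 3$ are correct, and the observation that $x\mapsto\delta_k-1-x$ is a bijection of $\mathbb Z/3\mathbb Z$ is precisely why the three colours stay distinct at every scale. The covering clause is handled cleanly by noting that the three dilates $3J$ with $J\in\{I-2^k,I,I+2^k\}$ hit the three cosets.

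You flag the verification of the lattice axioms for the $\mathcal D_j$ and their products as the remaining work, without carrying it out; it is routine but should be written down. For Axioms~(2) and~(3) in each $\mathcal D_j$ the shortest route is to combine Axiom~(3) for $\mathcal D$ with the covering property you already have: given compact $K$, choose $I\in\mathcal D$ with $K\subset I$ and take the corresponding $R_I\in\mathcal D_j$. For the $n$-fold products the only extra care needed is that coordinate projections of a compact set may land in intervals of different scales, so one must pass to ancestors at a common scale before forming the product cube. None of this is an obstacle. For reference, the argument in \cite{LN} runs the same colouring idea directly in $\mathbb R^n$, using cosets of $(3\cdot2^k\mathbb Z)^n$ inside $(2^k\mathbb Z)^n$, rather than reducing to dimension one first; the two organisations amount to the same thing.
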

\begin{rem}
\label{Rem}Fix a dyadic lattice $\mathcal{D}$. For an arbitrary cube $Q\subset\mathbb{R}^{n}$
there is a cube $Q'\in\mathcal{D}$ such that $\frac{\ell({Q})}{2}<\ell(Q')\leq \ell({Q})$
and $Q\subset3Q'$. We can take a cube with that property
since every generation of cubes in $\mathcal{D}$ tiles $\mathbb{R}^n$.
From this and the preceding lemma it follows that $3Q'=P\in\mathcal{D}_{j}$
for some $j\in\{1,\dots,3^{n}\}$. Therefore, for every cube $Q\subset\mathbb{R}^{n}$
there exists  some $j\in\{1,\dots,3^{n}\}$ and some $P\in\mathcal{D}_{j}$ such that $Q\subset P$ and
$\ell({P})\leq3\ell({Q})$ and consequently $|Q|\leq|P|\leq3^{n}|Q|$. 

\end{rem}

\subsection{Proof of Theorem \ref{acsparse}}

From Remark \ref{Rem} it follows that there exist $3^{n}$ dyadic
lattices such that for every cube $Q$ of $\mathbb{R}^{n}$ there
is a cube $R_{Q}\in\mathcal{D}_{j}$ for some $j$ for which $3Q\subset R_{Q}$
and $|R_{Q}|\leq9^{n}|Q|$.

We claim that there is a positive constant $c_{n,m,\alpha}$ verifying that, for any cube $Q_{0}\subset\mathbb{R}^{n}$,  there
exists a $\frac{1}{2}$-sparse family $\mathcal{F}\subset\mathcal{D}(Q_{0})$
such that for a.e. $x\in Q_{0}$ 
\begin{equation}
\left|(I_{\alpha})_{b}^{m}(f\chi_{3Q_{0}})(x)\right|\leq c_{n,m,\alpha}\sum_{h=0}^{m}\binom{m}{h}\mathcal{B}_{\mathcal{F}}^{m,h}(b,f)(x),\label{eq:Claim-1-1}
\end{equation}
where 
\[
\mathcal{B}_{\mathcal{F}}^{m,h}(b,f)(x)=\sum_{Q\in\mathcal{F}}|b(x)-b_{R_{Q}}|^{m-h}|3Q|^{\frac{\alpha}{n}}|f(b-b_{R_{Q}})^{h}|_{3Q}\chi_{Q}(x).
\]

Suppose that we have already proved the claim. Let us take a partition
of $\mathbb{R}^{n}$ by a family $\{Q_k\}_{k\in\mathbb{N}}$ of cubes $Q_{k}$ such that $\supp(f)\subset3Q_{k}$
for each $j\in\mathbb{N}$. We can do it as follows. We start with a cube $Q_{0}$
such that $\supp(f)\subset Q_{0}.$ And cover $3Q_{0}\setminus Q_{0}$
by $3^{n}-1$ congruent cubes $Q_{k}$. Each of them satisfies $Q_{0}\subset3Q_{k}$.
We do the same for $9Q_{0}\setminus3Q_{0}$ and so on. The union of
all those cubes, including $Q_{0}$, will satisfy the desired properties.

Fix $k\in \mathbb{N}$ and apply the claim to the cube $Q_{k}$. Then we have that since
$\supp f\subset3Q_{k}$ the following estimate holds for almost every $x\in\mathbb{R}^n$:
\[
\left|(I_{\alpha})_{b}^{m}f(x)\right|\chi_{Q_{k}}(x)=\left|(I_{\alpha})_{b}^{m}(f\chi_{3Q_{k}})(x)\right|\chi_{Q_{k}}(x)\leq c_{n,m,\alpha}\sum_{h=0}^{m}\binom{m}{h}\mathcal{B}_{\mathcal{F}_{k}}^{m,h}(b,f)(x),
\]
where $\mathcal{F}_{k}\subset\mathcal{D}(Q_{k})$ is a $\frac{1}{2}$-sparse
family. Taking $\mathcal{F}=\bigcup_{k\in\mathbb{N}}\mathcal{F}_{k}$ we have that
$\mathcal{F}$ is a $\frac{1}{2}$-sparse family and
\[
\left|(I_{\alpha})_{b}^{m}f(x)\right|\leq c_{n,m,\alpha}\sum_{h=0}^{m}\binom{m}{h}\mathcal{B}_{\mathcal{F}}^{m,h}(b,f)(x),\qquad \text{a.e. } x\in\mathbb{R}^n.
\]
Fix $Q\subset \mathcal{F}$. Since $3Q\subset R_{Q}$ and $|R_{Q}|\leq3^{n}|3Q|$, we have that
$|3Q|^{\frac{\alpha}{n}}|f(b-b_{R_{Q}})^{h}|_{3Q}\leq 3^n|R_{Q}|^{\frac{\alpha}{n}}|f(b-b_{R_{Q}})^{h}|_{R_Q}$.
Setting 
\[
\mathcal{S}_{j}=\left\{ R_{Q}\in\mathcal{D}_{j}\,:\,Q\in\mathcal{F}\right\} 
\]
and using that $\mathcal{F}$ is $\frac{1}{2}$-sparse, we obtain
that each family $\mathcal{S}_{j}$ is $\frac{1}{2\cdot9^{n}}$-sparse.
Then we have that 
\[
\left|(I_{\alpha})_{b}^{m}f(x)\right|\leq c_{n,m,\alpha}\sum_{j=1}^{3^{n}}\sum_{h=0}^{m}\binom{m}{h}\mathcal{A}_{\alpha,\mathcal{S}_{j}}^{m,h}(b,f)(x),\qquad \text{a.e. }x\in\mathbb{R}^n,
\]
and we are done.

\subsubsection*{Proof of the Claim (\ref{eq:Claim-1-1})}

To prove the claim it suffices to prove the following recursive estimate:
there is a positive constant $c_{n,m,\alpha}$ verifying that there exists a countable family $\{P_j\}_{j}$ of  pairwise disjoint cubes in $\mathcal{D}(Q_{0})$
such that $\sum_{j}|P_{j}|\leq\frac{1}{2}|Q_{0}|$ and 
\begin{equation*}
\begin{aligned}
 |(I_{\alpha})_{b}^{m}&(f\chi_{3Q_{0}})(x)|\chi_{Q_{0}}(x)\\
 & \le  c_{n,m,\alpha}\sum_{h=0}^{m}\binom{m}{h}|b(x)-b_{R_{Q_{0}}}|^{m-h}|3Q_{0}|^{\frac{\alpha}{n}}|f(b-b_{R_{Q_{0}}})^{h}|_{3Q_{0}}\chi_{Q_{0}}(x)\\
 & +  \sum_{j}|(I_{\alpha})_{b}^{m}(f\chi_{3P_{j}})(x)|\chi_{P_{j}}(x),\qquad \text{a.e. }x\in Q_0.
 \end{aligned}
\end{equation*}
 Iterating this estimate, we obtain \eqref{eq:Claim-1-1}
with $\mathcal{F}=\{P_{j}^{k}\}_{j,k}$ where $\{P_{j}^{0}\}_{j}:=\{Q_{0}\}$,
$\{P_{j}^{1}\}_{j}:=\{P_{j}\}_{j}$ and $\{P_{j}^{k}\}_{j}$ is the union of the cubes obtained
at the $k$-th stage of the iterative process from each of the cubes $P_j^{k-1}$ of the $(k-1)$-th stage. Clearly
$\mathcal{F}$ is a $\frac{1}{2}$-sparse family, since the conditions in the definition hold for the sets
\[
E_{P_{j}^{k}}=P_{j}^{k}\setminus\bigcup_{j}P_{j}^{k+1}.
\]
Let us prove then the recursive estimate.

For any countable family $\{P_j\}_j$ of disjoint cubes $P_{j}\in\mathcal{D}(Q_{0})$
we have that 

\[
\begin{split} & \left|(I_{\alpha})_{b}^{m}(f\chi_{3Q_{0}})(x)\right|\chi_{Q_{0}}(x)\\
= & \left|(I_{\alpha})_{b}^{m}(f\chi_{3Q_{0}})(x)\right|\chi_{Q_{0}\setminus\bigcup_{j}P_{j}}(x)+\sum_{j}\left|(I_{\alpha})_{b}^{m}(f\chi_{3Q_{0}})(x)\right|\chi_{P_{j}}(x)\\
\leq & \left|(I_{\alpha})_{b}^{m}(f\chi_{3Q_{0}})(x)\right|\chi_{Q_{0}\setminus\bigcup_{j}P_{j}}(x)+\sum_{j}\left|(I_{\alpha})_{b}^{m}(f\chi_{3Q_{0}\setminus3P_{j}})(x)\right|\chi_{P_{j}}(x)\\
 & +\sum_{j}\left|(I_{\alpha})_{b}^{m}(f\chi_{3P_{j}})(x)\right|\chi_{P_{j}}(x)
\end{split}
\]
for almost every $x\in\mathbb{R}^n$.
So it suffices to show that we can find a positive constant $c_{n,m,\alpha}$ in such a way we can choose a countable family $\{P_j\}_j$ of pairwise disjoint
cubes in $\mathcal{D}(Q_{0})$ with $\sum_{j}|P_{j}|\leq\frac{1}{2}|Q_{0}|$
and such that, for a.e. $x\in Q_{0}$,
\begin{equation}
\begin{split} & \left|(I_{\alpha})_{b}^{m}(f\chi_{3Q_{0}})(x)\right|\chi_{Q_{0}\setminus\bigcup_{j}P_{j}}(x)+\sum_{j}\left|(I_{\alpha})_{b}^{m}(f\chi_{3Q_{0}\setminus3P_{j}})(x)\right|\chi_{P_{j}}(x)\\
 & \leq c_{n,m,\alpha}\sum_{h=0}^{m}\binom{m}{h}|b(x)-b_{R_{Q_{0}}}|^{m-h}|3Q_{0}|^{\frac{\alpha}{n}}|f(b-b_{R_{Q_{0}}})^{h}|_{3Q_{0}}\chi_{Q_{0}}(x)
\end{split}
\label{eq:3.2}
\end{equation}
Using that $(I_{\alpha})_{b}^{m}f=(I_{\alpha})_{b-c}^{m}f$ for any
$c\in\mathbb{R}$, and also that 
\[
(I_{\alpha})_{b-c}^{m}f=\sum_{h=0}^{m}(-1)^{h}\binom{m}{h}I_{\alpha}((b-c)^{h}f)(b-c)^{m-h},
\]
it follows that
\begin{eqnarray}
 &  & |(I_{\alpha})_{b}^{m}(f\chi_{3Q_{0}})|\chi_{Q_{0}\setminus\cup_{j}P_{j}}(x)+\sum_{j}|(I_{\alpha})_{b}^{m}(f\chi_{3Q_{0}\setminus3P_{j}})|\chi_{P_{j}}(x)\nonumber \\
 &  & \le\sum_{h=0}^{m}\binom{m}{h}|b(x)-b_{R_{Q_{0}}}|^{m-h}|I_{\alpha}((b-b_{R_{Q_{0}}})^{h}f\chi_{3Q_{0}})(x)|\chi_{Q_{0}\setminus\cup_{j}P_{j}}(x)\label{eq:NoPj-1}\\
 &  & +\sum_{h=0}^{m}\binom{m}{h}|b(x)-b_{R_{Q_{0}}}|^{m-h}\sum_{j}|I_{\alpha}((b-b_{R_{Q_{0}}})^{h}f\chi_{3Q_{0}\setminus3P_{j}})(x)|\chi_{P_{j}}(x).\label{eq:SumPj-1}
\end{eqnarray}
Now we define the set $E=\cup_{h=0}^{m}E_{h}$, where 
\[
E_{h}=\left\{ x\in Q_{0}\,:\,\mathcal{M}_{I_{\alpha},Q_{0}}\left((b-b_{R_{Q_{0}}})^{h}f\right)(x)>c_{n,m,\alpha}|3Q_{0}|^{\frac{\alpha}{n}}|(b-b_{R_{Q_{0}}})^{h}f|{}_{3Q_{0}}\right\}, 
\]
with $c_{n,m,\alpha}$ being a positive number to be chosen.

As we proved in Lemma \ref{LemmaTec}, we have that
\[
c_{n,\alpha}:=\|\mathcal{M}_{I_{\alpha}}\|_{L^{1}(\mathbb{R}^n)\rightarrow L^{\frac{n}{n-\alpha},\infty}(\mathbb{R}^n)}<\infty,
\]
so, since $\mathcal{M}_{I_{\alpha},Q_{0}}g\leq \mathcal{M}_{I_{\alpha}}(g\chi_{3Q_0})$, we can write, for each $h\in\{0,1,\ldots,m\}$, 
\[
\begin{split}|E_{h}| & \leq\left(\frac{c_{n,\alpha}\int_{3Q_{0}}|f(b-b_{R_{Q_{0}}})^{h}|}{c_{n,m,\alpha}|3Q_{0}|^{\frac{\alpha}{n}}|f(b-b_{R_{Q_{0}}})^{h}|{}_{3Q_{0}}}\right)^{\frac{n}{n-\alpha}}\\
 & =\left(\frac{c_{n,\alpha}|3Q_{0}|^{\frac{\alpha}{n}-1}\int_{3Q_{0}}|f(b-b_{R_{Q_{0}}})^{h}|}{c_{n,m,\alpha}|3Q_{0}|^{\frac{\alpha}{n}}|f(b-b_{R_{Q_{0}}})^{h}|{}_{3Q_{0}}}\right)^{\frac{n}{n-\alpha}}|3Q_{0}|^{\left(1-\frac{\alpha}{n}\right)\frac{n}{n-\alpha}}\\
 & =\left(\frac{c_{n,\alpha}}{c_{n,m,\alpha}}\right)^{\frac{n}{n-\alpha}}|3Q_{0}|^{\left(1-\frac{\alpha}{n}\right)\frac{n}{n-\alpha}}=3^{n}\left(\frac{c_{n,\alpha}}{c_{n,m,\alpha}}\right)^{\frac{n}{n-\alpha}}|Q_{0}|,
\end{split}
\]
and we can choose $c_{n,m,\alpha}$ such that 
\begin{equation}
|E|\leq \sum_{h=0}^m|E_h|\leq\frac{1}{2^{n+2}}|Q_{0}|,\label{eq:EQ0}
\end{equation}
this choice being independent from $Q_0$ and $f$.

Now we apply Calder\'on-Zygmund decomposition to the function $\chi_{E}$
on $Q_{0}$ at height $\lambda=\frac{1}{2^{n+1}}$. We obtain a countable family $\{P_j\}_{j}$ of  pairwise
disjoint cubes in $\mathcal{D}(Q_{0})$ such that 
\[
\chi_{E}(x)\leq\frac{1}{2^{n+1}},\qquad \text{a.e. }x\not\in\bigcup_j P_{j}.
\]
From this it follows that $\left|E\setminus\bigcup_{j}P_{j}\right|=0$.
The family $\{P_j\}_j$ also satisfies that 
\[
\sum_{j}|P_{j}|\leq2^{n+1}|E|\leq\frac{1}{2}|Q_{0}|
\]
and
\[
\frac{|P_{j}\cap E|}{|P_{j}|}=\frac{1}{|P_{j}|}\int_{P_{j}}\chi_{E}(x)\leq\frac{1}{2}\qquad\text{ for all }j,
\]
from which it readily follows that $|P_{j}\cap E^{c}|>0$ for every $j$. Indeed, given $j$,
\[
|P_{j}|=|P_{j}\cap E|+|P_{j}\cap E^{c}|\leq\frac{1}{2}|P_{j}|+|P_{j}\cap E^{c}|,
\]
and from this it follows that $0<\frac{1}{2}|P_{j}|<|P_{j}\cap E^{c}|$. 

Fix some $j$. Since we have $P_{j}\cap E^{c}\neq\emptyset$, we observe that 
\[
\mathcal{M}_{I_{\alpha},Q_{0}}\left((b-b_{R_{Q_{0}}})^{h}f\right)(x)\leq c_{n,m,\alpha}|3Q_{0}|^{\frac{\alpha}{n}}|(b-b_{R_{Q_{0}}})^{h}f|{}_{3Q_{0}}
\]
for some $x\in P_{j}$ and this implies that, for any $Q\subset Q_0$ containing $x$, we have
\[
\underset{{\scriptscriptstyle \xi\in Q}}{\esssup}\left|I_{\alpha}((b-b_{R_{Q_{0}}})^{h}f\chi_{3Q_{0}\setminus3Q})(\xi)\right|\leq c_{n,m,\alpha}|3Q_{0}|^{\frac{\alpha}{n}}|(b-b_{R_{Q_{0}}})^{h}f|{}_{3Q_{0}},
\]
which allows us to control the summation in (\ref{eq:SumPj-1}) by considering the cube $P_j$.

Now, by \eqref{lema:1} in Lemma \ref{LemmaTec}, we know that 
\[
\left|I_{\alpha}((b-b_{R_{Q_{0}}})^{h}f\chi_{3Q_{0}})(x)\right|\leq\mathcal{M}_{I_{\alpha},Q_{0}}\left((b-b_{R_{Q_{0}}})^{h}f\right)(x),\qquad \text{a.e. }x\in Q_{0}.
\]
Since  $\left|E\setminus\bigcup_{j}P_{j}\right|=0$ we have, by the definition of $E$, that
\[
\mathcal{M}_{I_{\alpha},Q_{0}}\left((b-b_{R_{Q_{0}}})^{h}f\right)(x)\leq c_{n,m,\alpha}|3Q_{0}|^{\frac{\alpha}{n}}|(b-b_{R_{Q_{0}}})^{h}f|{}_{3Q_{0}},\quad \text{a.e. }x\in Q_0\setminus\bigcup_{j}P_{j}.
\]
Consequently,
\[
\left|I_{\alpha}((b-b_{R_{Q_{0}}})^{h}f\chi_{3Q_{0}})(x)\right|\leq c_{n,m,\alpha}|3Q_{0}|^{\frac{\alpha}{n}}|(b-b_{R_{Q_{0}}})^{h}f|{}_{3Q_{0}},\quad \text{a.e. }x\in Q_0\setminus\bigcup_{j}P_{j}.
\]
These estimates allow us to control the remaining terms in (\ref{eq:NoPj-1}),
so we are done.

\section{Proofs of Theorem \ref{thm:BloomFrac} and Corollary \ref{cor:oneweight}}\label{Sec:ThmBFCor}
The proof of Theorem  \ref{thm:BloomFrac} is presented in the two first subsections. First we deal with the upper bound and then with the necessity.

We will end up this section with a subsection devoted to establish Corollary \ref{cor:oneweight}.

\subsection{Proof of the upper bound}
To settle the upper bound in Theorem \ref{thm:BloomFrac} we argue as in \cite[Theorem 1.4]{LORR} or, to be more precise as in \cite[Theorem 1.1]{LORR2}. To do that we need to borrow the following estimate that was obtained in the case $j=1$ in \cite{LORR} and for $j>1$ in \cite{LORR2} and that can be stated as follows.
\begin{lem}\label{Lem:OutBMO}
Let $\mathcal{S}$ be a sparse family contained in a dyadic lattice $\mathcal{D}$, $\eta$ a weight, $b\in \BMO_\eta$ and $f\in\mathcal{C}_c^\infty(\mathbb{R}^n)$. There exists a possibly larger sparse family $\tilde{\mathcal{S}}\subset\mathcal{D}$ containing $\mathcal{S}$ such that, for every positive integer $j$ and every $Q\in\tilde{\mathcal{S}}$
\[\int_Q|b-b_Q|^j|f|\leq c_n \|b\|_{\BMO_\eta}^j \int_Q A^j_{\tilde{\mathcal{S}},\eta}f\]
where  $A^j_{\tilde{\mathcal{S}},\eta}f$ stands for the $j$-th iteration of $A_{\tilde{\mathcal{S}},\eta}$, which is defined by $A_{\tilde{\mathcal{S}},\eta}f:=A_{\tilde{\mathcal{S}}}(f)\eta$, with $A_{\tilde{\mathcal{S}}}$ being the sparse operator given by
\[A_{\tilde{\mathcal{S}}}f(x)=\sum_{Q\in\tilde{\mathcal{S}}}\frac{1}{|Q|}\int_Q |f|\chi_Q(x).\]
\end{lem}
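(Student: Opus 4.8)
\textbf{Proof plan for Lemma \ref{Lem:OutBMO}.}
The plan is to reduce the statement to a local, single-cube estimate and then to build the enlarged sparse family by a stopping-time argument on each $Q\in\mathcal{S}$, exactly in the spirit of \cite{LORR} (case $j=1$) and \cite{LORR2} (case $j>1$). First I would fix $Q\in\mathcal{S}$ and run a Calder\'on--Zygmund stopping procedure on $Q$ relative to the averages of $|f|$: select the maximal dyadic subcubes $P\subset Q$ for which $\frac{1}{|P|}\int_P|f|>2\cdot\frac{1}{|Q|}\int_Q|f|$, call this first generation $\mathrm{ch}_{\mathcal{S}}(Q)$, and iterate. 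The collection $\tilde{\mathcal{S}}$ of all cubes produced this way over all $Q\in\mathcal{S}$ (together with the original ones) is sparse: the maximality gives $\sum_{P\in\mathrm{ch}_{\mathcal{S}}(Q)}|P|\le\frac12|Q|$ by the usual weak-$(1,1)$ bound for the dyadic maximal function restricted to $Q$, so the sets $E_Q=Q\setminus\bigcup_{P\in\mathrm{ch}_{\mathcal{S}}(Q)}P$ are disjoint and satisfy $|E_Q|\ge\frac12|Q|$; one checks that enlarging a sparse family this way keeps it sparse (with a possibly worse sparseness constant, harmless here). On $E_Q$ one has the Lebesgue-type bound $|f|\lesssim \langle|f|\rangle_Q$ a.e., and more importantly every point of $Q$ lies in a decreasing chain of stopping cubes along which the averages of $|f|$ grow geometrically.

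Next, on a fixed $Q\in\tilde{\mathcal{S}}$ I would estimate $\int_Q|b-b_Q|^j|f|$ by splitting the oscillation of $b$ along the stopping chain. For $x\in Q$ let $Q=Q^{(0)}\supsetneq Q^{(1)}\supsetneq\cdots$ be the stopping cubes in $\tilde{\mathcal{S}}$ containing $x$; writing $b(x)-b_Q$ as a telescoping sum of $b_{Q^{(i+1)}}-b_{Q^{(i)}}$ plus the final local term, and using $|b_{Q^{(i+1)}}-b_{Q^{(i)}}|\le c_n\|b\|_{\BMO_\eta}\,\eta(Q^{(i)})/|Q^{(i)}|\cdot$ (wait — more precisely $|b_{P}-b_{R}|\lesssim \|b\|_{\BMO_\eta}\langle\eta\rangle_R$ when $P$ is a dyadic child-generation cube of $R$ in the stopping tree, by the $\BMO_\eta$ definition and the bounded ratio of sidelengths within a stopping step is NOT bounded — so instead one uses that consecutive stopping cubes need not be adjacent generations, and one absorbs the jump through a John--Nirenberg/$\BMO_\eta$ argument as in \cite{LORR2}). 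Raising the telescoping sum to the $j$-th power and expanding multinomially, each resulting term is a product of $j$ factors of the form $\|b\|_{\BMO_\eta}\langle\eta\rangle_{Q^{(i)}}$ with $Q^{(i)}$ ranging over the stopping chain, which is precisely the pointwise value of a $j$-fold iterate of the sparse operator $A_{\tilde{\mathcal{S}},\eta}$ acting along that chain. Since $A_{\tilde{\mathcal{S}},\eta}f=A_{\tilde{\mathcal{S}}}(f)\,\eta$ and the averages appearing are exactly the stopping-cube averages of $|f|$, summing over $x\in Q$ (i.e.\ integrating) recovers $\int_Q A^j_{\tilde{\mathcal{S}},\eta}f$ up to the constant $c_n\|b\|_{\BMO_\eta}^j$; the combinatorial factor from the multinomial expansion is bounded by a dimensional constant times a constant depending only on $j$, which is absorbed.

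The main obstacle is the passage from the ``adjacent generation'' oscillation control to control along an arbitrary stopping chain: consecutive stopping cubes $Q^{(i)}\supsetneq Q^{(i+1)}$ may differ by many dyadic generations, so $|b_{Q^{(i+1)}}-b_{Q^{(i)}}|$ is not immediately bounded by $\|b\|_{\BMO_\eta}\langle\eta\rangle_{Q^{(i)}}$ times a geometric factor. The standard fix (carried out in \cite{LORR2}) is to interpolate all the intermediate dyadic cubes between $Q^{(i+1)}$ and $Q^{(i)}$ and to use that $\eta$, being a weight with a doubling-type or $A_\infty$-type control in the applications, makes $\sum_{R:\,Q^{(i+1)}\subsetneq R\subseteq Q^{(i)}}\langle\eta\rangle_R$ comparable to $\langle\eta\rangle_{Q^{(i)}}$ up to a constant — or, in the purely $\BMO_\eta$ setting, one invokes the weighted John--Nirenberg inequality for $\BMO_\eta$ to control $\int_{Q^{(i)}}|b-b_{Q^{(i)}}|^j$ directly and feeds the result into the geometric decay coming from the stopping condition on $|f|$. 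Once this telescoping-along-a-chain estimate is in place, the rest is the bookkeeping described above, and the enlarged family $\tilde{\mathcal{S}}$ is the one furnished by the stopping construction.
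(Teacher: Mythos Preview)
The paper does not prove this lemma; it simply quotes it from \cite{LORR} (case $j=1$) and \cite{LORR2} (case $j>1$). So the comparison is with the argument in those references, and there your plan diverges at a decisive point: you run the stopping time on the averages of $|f|$, whereas the argument in \cite{LORR,LORR2} runs the stopping time on the \emph{oscillation of $b$}. Concretely, for each $Q$ one selects the maximal dyadic subcubes $P\subset Q$ with $\langle|b-b_Q|\rangle_P>C_n\,\langle|b-b_Q|\rangle_Q$, iterates, and obtains a sparse family $\tilde{\mathcal S}\supset\mathcal S$ together with the pointwise bound
\[
|b(x)-b_Q|\le c_n\sum_{\substack{R\in\tilde{\mathcal S}\\ R\subset Q}}\Omega(b,R)\,\chi_R(x),
\qquad \Omega(b,R):=\frac{1}{|R|}\int_R|b-b_R|.
\]
Since $\Omega(b,R)\le\|b\|_{\BMO_\eta}\langle\eta\rangle_R$ for \emph{any} weight $\eta$, integrating against $|f|$ gives the case $j=1$ immediately, and the higher $j$ follow by iterating this inequality (this is the content of \cite{LORR2}). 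No hypothesis on $\eta$ beyond local integrability is needed.

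Your choice to stop on $|f|$ is precisely what produces the obstacle you identify and cannot close: consecutive stopping cubes $Q^{(i)}\supsetneq Q^{(i+1)}$ may be separated by arbitrarily many dyadic generations, and nothing in the $|f|$-stopping controls $|b_{Q^{(i+1)}}-b_{Q^{(i)}}|$. Your two suggested remedies do not work under the hypotheses of the lemma: the estimate $\sum_{Q^{(i+1)}\subsetneq R\subset Q^{(i)}}\langle\eta\rangle_R\lesssim\langle\eta\rangle_{Q^{(i)}}$ is false for a general weight (it would require a reverse-doubling/$A_\infty$ assumption that is not made), and there is no John--Nirenberg inequality for $\BMO_\eta$ with an arbitrary weight $\eta$ that would let you control $\int_{Q^{(i)}}|b-b_{Q^{(i)}}|^j$ in the way you need. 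The fix is simply to change the stopping rule: stop on $b$, not on $f$.
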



We will also make use of the following quantitative estimates.
Let $1<p<\infty$ and $\mathcal{S}$ a $\gamma$-sparse family. If $w\in A_p$ then
\begin{equation}\label{eq:Sparse}
\|A_\mathcal{S}\|_{L^p(w)}\leq c_{n,p}[w]_{A_p}^{\max\left\{1,\frac{1}{p-1}\right\}}.\end{equation}
If $p,q,\alpha$ are as in the hypothesis of Theorem \ref{thm:BloomFrac} and $w\in A_{p,q}$, then
\begin{equation}\label{eq:Frac}
\|I_\mathcal{S}^\alpha\|_{L^q(w^q)\rightarrow L^p(w^p)}\leq c_{n,p,q,\alpha}[w]_{A_{p,q}}^{\left(1-\frac{\alpha}{n}\right)\max\left\{1,\frac{p}{q'}\right\}},
\end{equation}
where \[I_\mathcal{S}^\alpha f(x)=\sum_{Q\in\mathcal{S}}\frac{1}{|Q|^{1-\frac{\alpha}{n}}}\int_Q |f|\chi_Q(x).\]
We observe that the proof of \eqref{eq:Frac} is implicit in one of the proofs of \cite[Theorem 2.6]{LMPT} that relies essentially on computing the norm of the operator $I_\mathcal{S}^\alpha$ by duality.

At this point we are in the position to prove the estimate  \eqref{eq:depIalphabm}.

We assume by now that $b \in L^m_{\mathrm{loc}}(\mathbb{R}^n)$ and we prove in the end that this assumption is always true. Taking into account Theorem \ref{acsparse}, it suffices to prove the estimate for the sparse operators 
\[A_{\alpha,\mathcal{S}}^{m,h}(b,f):=\sum_{Q\in\mathcal{S}}|b-b_Q|^{m-h}|Q|^{\alpha/n}|f(b-b_Q)^h|_Q\chi_Q,\qquad h\in\{0,1,\ldots,m\}.\]
Assume that $b\in \BMO_\eta$ with $\eta$ to be chosen. 
We observe that, using Lemma \ref{Lem:OutBMO},
\[
\begin{split}
\left|\int_{\mathbb{R}^n} A_{\alpha,\mathcal{S}}^{m,h}(b,f)g\lambda^q\right|&\leq\sum_{Q\in\mathcal{S}}\left(\int_Q|g||b-b_Q|^{m-h}\lambda^q\right)\frac{1}{|Q|^{1-\alpha/n}}\int_Q|b-b_Q|^h|f|.\\
& \leq c_n\|b\|_{\BMO_\eta}^m\sum_{Q\in\mathcal{S}}\left(\int_QA_{\tilde{\mathcal{S}},\eta}^{m-h}(|g|\lambda^q)\right)\frac{1}{|Q|^{1-\alpha/n}}\int_QA_{\tilde{\mathcal{S}},\eta}^{h}(|f|)\\
 &\leq c_n\|b\|_{\BMO_\eta}^m \int_{\mathbb{R}^n}\sum_{Q\in\mathcal{S}}\frac{1}{|Q|^{1-\alpha/n}}\left(\int_Q A_{\tilde{\mathcal{S}},\eta}^{h}(|f|)\right)\chi_QA_{\tilde{\mathcal{S}},\eta}^{m-h}(|g|\lambda^q)\\
 &=c_n\|b\|_{\BMO_\eta}^m\int_{\mathbb{R}^n} I_{\mathcal{S}}^\alpha\left[A_{\tilde{\mathcal{S}},\eta}^h(|f|)\right](x)A_{\tilde{\mathcal{S}},\eta}^{m-h}(|g|\lambda^q)(x)dx.
\end{split}
\]
Let us call $I_{\mathcal{S},\eta}^\alpha f:=I_{\mathcal{S}}^\alpha (f) \eta$. Now, the self-adjointness of $A_{\tilde{\mathcal{S}}}$ yields
\[
\begin{split}
\int_{\mathbb{R}^n}I_{\mathcal{S}}^\alpha\left(A_{\tilde{\mathcal{S}},\eta}^h(|f|)\right)A_{\tilde{\mathcal{S}},\eta}^{m-h}(|g|\lambda^q)
=\int_{\mathbb{R}^n} A_{\tilde{\mathcal{S}}}\left\{A_{\tilde{\mathcal{S}},\eta}^{m-h-1}\left[I_{\mathcal{S},\eta}^\alpha\left(A_{\tilde{\mathcal{S}},\eta}^h(|f|)\right)\right]\right\}|g|\lambda^q.
\end{split}
\]
Combining the preceding estimates we have that
\[\left|\int_{\mathbb{R}^n} A_{\alpha,\mathcal{S}}^{m,h}(b,f)g\lambda^q\right|\leq c_n \|b\|_{\BMO_\eta}^m
\left\|A_{\tilde{\mathcal{S}}}A_{\tilde{\mathcal{S}},\eta}^{m-h-1}I_{\mathcal{S},\eta}^\alpha A_{\tilde{\mathcal{S}},\eta}^h(|f|)\right\|_{L^q(\lambda^q)}\|g\|_{L^{q'}(\lambda^q)}\]
and consequently, taking supremum on $g\in L^{q'}(\lambda^q)$ with $\|g\|_{L^{q'}(\lambda^q)}=1$,
\[
\begin{split}
\|A_{\alpha,\mathcal{S}}^{m,h}(b,f)\|_{L^q(\lambda^q)}\leq c_n\|b\|_{\BMO_\eta}^m \left\|A_{\tilde{\mathcal{S}}}A_{\tilde{\mathcal{S}},\eta}^{m-h-1}I_{\mathcal{S},\eta}^\alpha A_{\tilde{\mathcal{S}},\eta}^h(|f|)\right\|_{L^q(\lambda^q)}.
\end{split}
\]
Taking into account \eqref{eq:Sparse}
\[
\begin{split}
 \left\|A_{\tilde{\mathcal{S}}}A_{\tilde{\mathcal{S}},\eta}^{m-h-1}\right.&\left.I_{\mathcal{S},\eta}^\alpha A_{\tilde{\mathcal{S}},\eta}^h(|f|)\right\|_{L^q(\lambda^q)}\\
&\leq c_{n,q}\left(\prod_{l=0}^{m-h-1}[\lambda^{q}\eta^{lq}]_{A_q}\right)^{\max\left\{1,\frac{1}{q-1}\right\}}\left\|I_{\mathcal{S},\eta}^\alpha A_{\tilde{\mathcal{S}},\eta}^h(|f|)\right\|_{L^q(\lambda^q\eta^{q(m-h-1)})}.
\end{split}
\]
Using \eqref{eq:Frac}, we have that
\[
\begin{split}
&\left\|I_{\mathcal{S},\eta}^\alpha A_{\tilde{\mathcal{S}},\eta}^h(|f|)\right\|_{L^q(\lambda^q\eta^{q(m-h-1)})}
=\left\|I_{\mathcal{S}}^\alpha A_{\tilde{\mathcal{S}},\eta}^h(|f|)\right\|_{L^q(\lambda^q\eta^{q(m-h)})}\\
&\leq c_{n,p,\alpha}[\lambda\eta^{m-h}]^{\left(1-\frac{\alpha}{n}\right)\max\left\{1,\frac{p'}{q}\right\}}_{A_{p,q}}\left\|A_{\tilde{\mathcal{S}},\eta}^h(|f|)\right\|_{L^p(\lambda^p\eta^{p(m-h)})}
\end{split}
\]
and applying again \eqref{eq:Sparse},
\[\left\|A_{\tilde{\mathcal{S}},\eta}^h(|f|)\right\|_{L^p(\lambda^p\eta^{p(m-h)})}\leq c_{n,p} \left(\prod_{l=m-h+1}^m[\lambda^p\eta^{lp}]_{A_p}\right)^{\max\left\{1,\frac{1}{p-1}\right\}}\|f\|_{L^p(\lambda^p\eta^{mp})}.\]
Gathering the preceding estimates we have that
\[\|A_{\alpha,\mathcal{S}}^{m,h}(b,f)\|_{L^q(\lambda^q)} \leq c_{n,\alpha,p}\|b\|_{\BMO_{\eta}}^m PQ [\lambda\eta^{m-h}]^{\left(1-\frac{\alpha}{n}\right)\max\left\{1,\frac{p'}{q}\right\}}_{A_{p,q}}\|f\|_{L^p(\lambda^p\eta^{mp})},\]
where
\[P=\left(\prod_{l=0}^{m-h-1}[\lambda^{q}\eta^{lq}]_{A_q}\right)^{\max\left\{1,\frac{1}{q-1}\right\}} \qquad Q=\left(\prod_{l=m-h+1}^m[\lambda^p\eta^{lp}]_{A_p}\right)^{\max\left\{1,\frac{1}{p-1}\right\}}.\]
Now we observe that choosing $\eta=\nu^{1/m}$, it readily follows from H\"older's inequality
\[
\begin{split}
[\lambda\nu^{\frac{m-h}{m}}]_{A_{p,q}}\leq [\lambda]_{A_{p,q}}^{\frac{h}{m}}[\mu]_{A_{p,q}}^{\frac{m-h}{m}}\quad\text{and}\quad [\lambda^r\nu^{r\frac{l}{m}}]_{A_r}\leq [\lambda^r]_{A_r}^{\frac{m-l}{m}}[\mu^r]_{A_r}^{\frac{l}{m}},\quad r=p,q.
\end{split}
\] 
Thus, we can write 
\[P\leq \left(\prod_{l=0}^{m-h-1}[\lambda^q]_{A_q}^{\frac{m-l}{m}}[\mu^q]_{A_q}^{\frac{l}{m}}\right)^{\max\{1,\frac{1}{q-1}\}} \quad Q\leq \left(\prod_{l=m-h+1}^{m}[\lambda^p]_{A_p}^{\frac{m-l}{m}}[\mu^p]_{A_p}^{\frac{l}{m}}\right)^{\max\{1,\frac{1}{p-1}\}}\]
and, computing the products, we obtain
\[
P\leq \left([\lambda^q]_{A_q}^{\frac{m+(h+1)}{2}}[\mu^q]_{A_q}^{\frac{m-(h+1)}{2}}\right)^{\frac{m-h}{m}\max\{1,\frac{1}{q-1}\}}
\]
and\[
Q\leq \left([\lambda^p]_{A_p}^{\frac{h-1}{2}}[\mu^p]_{A_p}^{m-\frac{h-1}{2}}\right)^{\frac{h}{m}\max\{1,\frac{1}{p-1}\}}.
\]

Combining all the preceding estimates leads to the desired estimate.

To end the proof we are going to show that $b\in L^m_{\text{loc}}(\mathbb{R}^n)$.
Indeed, for any compact set $K$ we choose a cube $Q$ such that $K\subset Q$. Then
\[\int_K|b|^m\leq \int_Q|b|^m\leq c_m\int_Q|b-b_Q|^m+c_m\left(\int_Q|b|\right)^m\]
Since $b$ is locally integrable, we only have to the deal with the first term.
We observe that by Lemma \ref{Lem:OutBMO},

\[
\begin{split}
\int_Q|b-b_Q|^m\chi_Q&\leq c_n \|b\|_{\BMO_{\nu^{1/m}}}^m \int_Q A^m_{\tilde{\mathcal{S}},\nu^{1/m}}(\chi_Q)
\\
&\leq c_n \|b\|_{\BMO_{\nu^{1/m}}}^m\left( \int_{\mathbb{R}^n} A^m_{\tilde{\mathcal{S}},\nu^{1/m}}(\chi_Q)^p\lambda^p\right)^{\frac{1}{p}}\left(\int_Q \lambda^{\frac{p}{1-p}}\right)^\frac{1}{p'}
\end{split}
\]
and arguing analogously as above we are done.

\subsection{Proof of the necessity}

We are going to follow ideas in \cite{LORR}. First we recall \cite[Lemma 2.1]{LORR}
\begin{lem}\label{BMO}
Let $\eta\in A_{\infty}$. Then
\[
\|b\|_{\BMO_{\eta}}\leq\sup_{Q}\omega_{\lambda}(b,Q)\frac{|Q|}{\eta(Q)}\qquad0<\lambda<\frac{1}{2^{n+1}}.
\]
where $\omega_{\lambda}(f,Q)=\inf_{c\in\mathbb{R}}\left((f-c)\chi_{Q}\right)^{*}(\lambda|Q|)$
and 
\[((f-c)\chi_Q)^*(\lambda|Q|)=\sup_{\stackrel{\scriptstyle E\subset Q}{|E|=\lambda|Q|}}\inf_{x\in E} |(f-c)(x)|.\]
\end{lem}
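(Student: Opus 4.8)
The plan is to reduce the statement to a single‑cube estimate: I will show that for every cube $Q_0$
\[
\int_{Q_0}|b-b_{Q_0}|\;\le\;C_{n,[\eta]_{A_\infty}}\,\Big(\sup_{Q}\omega_\lambda(b,Q)\tfrac{|Q|}{\eta(Q)}\Big)\,\eta(Q_0),
\]
and then dividing by $\eta(Q_0)$ and taking the supremum over $Q_0$ yields the assertion (so in the constant‑free form as written one would also need to track the constants carefully, but the content is the same). Throughout I assume, as is implicit in the statement, that $b\in L^1_{\mathrm{loc}}$, so that $b_{Q_0}$, a median $m_b(Q_0)$ of $b$ on $Q_0$, and $\omega_\lambda(b,Q)$ are all well defined and finite.

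Fix $Q_0$ and set $\Lambda:=\sup_{Q}\omega_\lambda(b,Q)\tfrac{|Q|}{\eta(Q)}$. First I would pass from the mean to a median: since $|b_{Q_0}-m_b(Q_0)|\le\frac{1}{|Q_0|}\int_{Q_0}|b-m_b(Q_0)|$, one has $\int_{Q_0}|b-b_{Q_0}|\le 2\int_{Q_0}|b-m_b(Q_0)|$, so it suffices to bound the latter. For this I would invoke Lerner's local mean oscillation decomposition: because $0<\lambda<\tfrac1{2^{n+1}}$, there is a sparse family $\mathcal F\subset\mathcal D(Q_0)$, with pairwise disjoint sets $E_Q\subset Q$ satisfying $|E_Q|\ge\tfrac12|Q|$, such that
\[
|b(x)-m_b(Q_0)|\le c_n\sum_{Q\in\mathcal F}\omega_\lambda(b,Q)\,\chi_Q(x)\qquad\text{for a.e.\ }x\in Q_0 .
\]
This is the only place where the size restriction on $\lambda$ is used. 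If one wants a self‑contained proof, this decomposition is produced by a stopping‑time argument: apply the Calderón–Zygmund decomposition to the set $\{x\in Q_0:|b(x)-c_{Q_0}|>2\omega_\lambda(b,Q_0)\}$, with $c_{Q_0}$ a median of $b$ on $Q_0$ — this set has measure at most $\lambda|Q_0|$ because the median is, up to a factor $2$, an infimizer in the definition of $\omega_\lambda$ — at a height $\sim 2^{-n}$, so that the stopping cubes have total measure at most $\tfrac12|Q_0|$ (this is where $\lambda<2^{-n-1}$ enters), iterate inside each stopping cube, and control the discrepancy between consecutive ``good centers'' by $\omega_\lambda$ of the parent cube.

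Granting the decomposition, integrating over $Q_0$ and regrouping gives
\[
\int_{Q_0}|b-m_b(Q_0)|\le c_n\sum_{Q\in\mathcal F}\omega_\lambda(b,Q)|Q|=c_n\sum_{Q\in\mathcal F}\Big(\omega_\lambda(b,Q)\tfrac{|Q|}{\eta(Q)}\Big)\eta(Q)\le c_n\,\Lambda\sum_{Q\in\mathcal F}\eta(Q).
\]
Here the hypothesis $\eta\in A_\infty$ enters precisely to bound $\sum_{Q\in\mathcal F}\eta(Q)$ by $\eta(Q_0)$: since $\eta\in A_\infty$ we have $\eta\in A_r$ for some finite $r$, and the $A_r$ condition gives $(|E|/|Q|)^r\le[\eta]_{A_r}\,\eta(E)/\eta(Q)$ for every cube $Q$ and every measurable $E\subset Q$; applying this to $E=E_Q$ with $|E_Q|\ge\tfrac12|Q|$ yields $\eta(Q)\le 2^r[\eta]_{A_r}\,\eta(E_Q)$. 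Since the $E_Q$ are pairwise disjoint subsets of $Q_0$,
\[
\sum_{Q\in\mathcal F}\eta(Q)\le 2^r[\eta]_{A_r}\sum_{Q\in\mathcal F}\eta(E_Q)=2^r[\eta]_{A_r}\,\eta\Big(\bigsqcup_{Q\in\mathcal F}E_Q\Big)\le 2^r[\eta]_{A_r}\,\eta(Q_0).
\]
Combining the last three displays gives $\int_{Q_0}|b-b_{Q_0}|\le 2^{\,r+1}c_n[\eta]_{A_r}\,\Lambda\,\eta(Q_0)$, and taking the supremum over $Q_0$ finishes the argument.

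The only substantial ingredient is the local mean oscillation decomposition, so the main obstacle — if one does not simply cite it — is reproving that stopping‑time decomposition; everything after it is bookkeeping. The single essential (if short) non‑formal step in that bookkeeping is the passage from the sparse‑family sum $\sum_{Q\in\mathcal F}\omega_\lambda(b,Q)|Q|$ to $\eta(Q_0)$, which genuinely requires $\eta\in A_\infty$ (equivalently $\eta\in A_r$ for some finite $r$): without it the $\eta$‑masses of the cubes in $\mathcal F$ need not sum controllably.
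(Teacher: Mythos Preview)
The paper does not prove this lemma: it is quoted verbatim as \cite[Lemma 2.1]{LORR} and used as a black box in the necessity argument. Your proposal is correct and is, in fact, exactly the proof given in that reference: reduce to a median, apply Lerner's local mean oscillation formula on $Q_0$ to get a $\tfrac12$-sparse family $\mathcal F\subset\mathcal D(Q_0)$ with
\[
|b(x)-m_b(Q_0)|\le c_n\sum_{Q\in\mathcal F}\omega_\lambda(b,Q)\chi_Q(x),
\]
integrate, and then use $\eta\in A_\infty$ (via membership in some $A_r$) to turn $\sum_{Q\in\mathcal F}\eta(Q)$ into a multiple of $\eta(Q_0)$ by passing to the pairwise disjoint pieces $E_Q$. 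The restriction $0<\lambda<2^{-(n+1)}$ is precisely the hypothesis under which Lerner's formula holds, so your identification of where it enters is right.

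One remark on the constant: your argument yields $\|b\|_{\BMO_\eta}\le c_{n,\eta}\,\sup_Q\omega_\lambda(b,Q)\tfrac{|Q|}{\eta(Q)}$ with $c_{n,\eta}$ depending on the $A_r$ characteristic of $\eta$, and that is also what the cited reference actually proves. The constant-free display in the present paper is a transcription artifact, not a stronger claim; in the application (the necessity half of Theorem~\ref{thm:BloomFrac}) only the qualitative inequality is used, so nothing is lost.
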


Armed with that lemma we are in the position to provide a proof of the necessity. Let $Q\subset\mathbb{R}^{n}$ be an arbitrary cube. There exists
a subset $E\subset Q$ with $|E|=\frac{1}{2^{n+2}}|Q|$ such that
for every $x\in E$
\[
\omega_{\frac{1}{2^{n+2}}}(b,Q)\leq|b(x)-m_{b}(Q)|
\]
where $m_{b}(Q)$ is a not necessarily unique number that satisfies
\[
\max\left\{ |\{x\in Q\,:\,b(x)>m_{b}(Q)\}|,\,|\{x\in Q\,:\,b(x)<m_{b}(Q)\}|\right\} \leq\frac{|Q|}{2}.
\]

Now let $A\subset Q$ with $|A|=\frac{1}{2}|Q|$ and such that $b(x)\geq m_{b}(Q)$
for every $x\in A$. 
We call $B=Q\setminus A$. Then $|B|=\frac{1}{2}|Q|$
and $b(x)\leq m_{b}(Q)$ for every $x\in B$. 

As $Q$ is the disjoint union of $A$ and $B$, at least half of the set $E$ is contained either in $A$ or in $B$. We may assume, without loss of generality, that half of $E$ is in $A$, so we have
\[
|E\cap A|=|E|-|E\cap (E\cap A)^c|\geq |E|-\frac{|E|}{2}=\frac{1}{2^{n+3}}|Q|.
\]
We also have then that 
\[
|B\cap(E\cap A)^{c}|=|B|-|B\cap(E\cap A)|\geq\frac{1}{2}|Q|-\frac{1}{2^{n+3}}|Q|=\left(\frac{1}{2}-\frac{1}{2^{n+3}}\right)|Q|
\]
So choosing 
\[
A'=A\cap E\qquad B'=B\cap(E\cap A)^{c}
\]
we have that if $y\in A'$ and $x\in B'$ then $\omega_{\frac{1}{2^{n+2}}}(b,Q)\leq b(y)-m_{b}(Q)\leq b(y)-b(x)$.
Consequently, using H\"older's inequality and the hypothesis on $(I_\alpha)_b^m$,
\[
\begin{split}\omega_{\frac{1}{2^{n+2}}}(b,Q)^{m}|A'||B'| & \leq\int_{A'}\int_{B'}(b(y)-b(x))^{m}dxdy\\
 & \leq \ell(Q)^{n-\alpha}\int_{A'}\int_{B'}\frac{(b(y)-b(x))^{m}}{|x-y|^{n-\alpha}}dxdy\\
 & =\ell(Q)^{n-\alpha}\int_{A'}(I_{\alpha})_{b}^{m}(\chi_{B'})(x)dx\\
 & \leq \ell(Q)^{n-\alpha}\left(\int_{Q}\lambda^{-q'}\right)^{\frac{1}{q'}}\left(\int_{\mathbb{R}^{n}}(I_{\alpha})_{b}^{m}(\chi_{B'})(x)^{q}\lambda(x)^{q}dx\right)^{\frac{1}{q}}\\
 & \leq c\ell(Q)^{n-\alpha}\left(\int_{Q}\lambda^{-q'}\right)^{\frac{1}{q'}}\left(\int_{Q}\mu^{p}\right)^{\frac{1}{p}}\\
 & =c|Q|^2\left(\frac{1}{|Q|}\int_{Q}\lambda^{-q'}\right)^{\frac{1}{q'}}\left(\frac{1}{|Q|}\int_{Q}\mu^{p}\right)^{\frac{1}{p}},
\end{split}
\]
where we used that 
\[
\frac{1}{q}+\frac{\alpha}{n}=\frac{1}{p}\iff\frac{1}{q'}+\frac{1}{p}=1+\frac{\alpha}{n}.
\]
And this yields
\[
\begin{split}\omega_{\frac{1}{2^{n+2}}}(b,Q)^{m} & \leq c\left(\frac{1}{|Q|}\int_{Q}\lambda^{-q'}\right)^{\frac{1}{q'}}\left(\frac{1}{|Q|}\int_{Q}\mu^{p}\right)^{\frac{1}{p}}.
\end{split}
\]
Since $\mu\in A_{p,q}$ we have that $\mu^p\in A_p
$. Hence (see for example \cite{DMRO})
  \[\frac{1}{|Q|}\int_Q\mu^p\leq c\left(\frac{1}{|Q|}\int_Q \mu^{\frac{p}{r}}\right)^r,\qquad \text{for any }r>1.\]
Using that inequality for some $r>1$ to be chosen combined with H\"older's inequality with $\beta=\frac{r}{p}\frac{1}{m}$, 
we have that
\[
\begin{split}\left(\frac{1}{|Q|}\int_{Q}\mu^{p}\right)^{\frac{1}{p}} & \leq c\left(\frac{1}{|Q|}\int_{Q}\mu^{\frac{p}{r}}\lambda^{-\frac{p}{r}}\lambda^{\frac{p}{r}}\right)^{\frac{r}{p}}\leq c\left(\frac{1}{|Q|}\int_{Q}\nu^{\frac{1}{m}}\right)^{m}\left(\frac{1}{|Q|}\int_{Q}\lambda^{\frac{p}{r}\beta'}\right)^{\frac{r}{p\beta'}}\\
 & =c\left(\frac{1}{|Q|}\int_{Q}\nu^{\frac{1}{m}}\right)^{m}\left(\frac{1}{|Q|}\int_{Q}\lambda^{\frac{p}{r-pm}}\right)^{\frac{r-pm}{p}}
\end{split}
\]
and choosing $r=pm+1$ we obtain
\[
\left(\frac{1}{|Q|}\int_{Q}\mu^{p}\right)^{\frac{1}{p}}\leq c\left(\frac{1}{|Q|}\int_{Q}\nu^{\frac{1}{m}}\right)^{m}\left(\frac{1}{|Q|}\int_{Q}\lambda^{p}\right)^{\frac{1}{p}}.
\]
This yields
\[
\omega_{\frac{1}{2^{n+2}}}(b,Q)^{m}\leq c\left(\frac{1}{|Q|}\int_{Q}\nu^{\frac{1}{m}}\right)^{m}\left(\frac{1}{|Q|}\int_{Q}\lambda^{-q'}\right)^{\frac{1}{q'}}\left(\frac{1}{|Q|}\int_{Q}\lambda^{p}\right)^{\frac{1}{p}}.
\]
Now we observe that since $q>p$ then by H\"older's inequality
\[
\left(\frac{1}{|Q|}\int_{Q}\lambda^{p}\right)^{\frac{1}{p}}\leq\left(\frac{1}{|Q|}\int_{Q}\lambda^{q}\right)^{\frac{1}{q}}
\quad\text{and}\quad
\left(\frac{1}{|Q|}\int_{Q}\lambda^{-q'}\right)^{\frac{1}{q'}}\leq\left(\frac{1}{|Q|}\int_{Q}\lambda^{-p'}\right)^{\frac{1}{p'}}.
\]
Thus
\[
\left(\frac{1}{|Q|}\int_{Q}\lambda^{-q'}\right)^{\frac{1}{q'}}\left(\frac{1}{|Q|}\int_{Q}\lambda^{p}\right)^{\frac{1}{p}}\leq\left[\left(\frac{1}{|Q|}\int_{Q}\lambda^{q}\right)\left(\frac{1}{|Q|}\int_{Q}\lambda^{-p'}\right)^{\frac{q}{p'}}\right]^{\frac{1}{q}}.
\]
Consequently, since $\lambda\in A_{p,q}$, we finally get
\[
\omega_{\frac{1}{2^{n+2}}}(b,Q)\leq c\frac{1}{|Q|}\int_{Q}\nu^{\frac{1}{m}}
\]
and we are done in view of Lemma \ref{BMO}.
\subsection{Proof of Corollary \ref{cor:oneweight}}
To prove the corollary it suffices to estimate each term in $\kappa_m$ in Theorem \ref{thm:BloomFrac} for $\mu=\lambda$. Indeed, we observe first that taking $\mu=\lambda$
\[\kappa_m=[\mu]^{\left(1-\frac{\alpha}{n}\right)\max\left\{1,\frac{p'}{q}\right\}}_{A_{p,q}} \sum_{h=0}^m\binom{m}{h}[\mu^q]_{A_q}^{(m-h)\max\{1,\frac{1}{q-1}\}}[\mu^p]_{A_p}^{h\max\{1,\frac{1}{p-1}\}}.
\]
Now, taking into account \eqref{eq:ApAqApq}, we get
\[
[\mu^{q}]_{A_{q}}^{(m-h)\max\left\{ 1,\frac{1}{q-1}\right\} }\leq[\mu]_{A_{p,q}}^{(m-h)\max\left\{ 1,\frac{1}{q-1}\right\} }
\]
and 
\[
[\mu^{p}]_{A_{p}}^{h\max\left\{ 1,\frac{1}{p-1}\right\} }\leq[\mu]_{A_{p,q}}^{h\frac{p}{q}\max\left\{ 1,\frac{1}{p-1}\right\} }.
\]
Consequently
\[
\kappa_m\leq c_m [\mu]_{A_{p,q}}^{\left(1-\frac{\alpha}{n}\right)\max\left\{ 1,\frac{p'}{q}\right\} +m\max\left\{ 1,\frac{1}{q-1},\frac{p}{q},\frac{p'}{q}\right\}}.
\]
Note that since $p<q$ we have that $\frac{p}{q}<1$ and also
\[
p<q\iff p'>q'\iff\frac{1}{q-1}<\frac{p'}{q}.
\]
This yields that $\max\left\{ 1,\frac{1}{q-1},\frac{p}{q},\frac{p'}{q}\right\} =\max\left\{ 1,\frac{p'}{q}\right\}$
and we have that 
\[
\kappa_m \leq c_m[\mu]_{A_{p,q}}^{\left(m+1-\frac{\alpha}{n}\right)\max\left\{ 1,\frac{p'}{q}\right\} },
\]
as we wanted to prove.

To establish the sharpness of the exponent in \eqref{Cor1} we will use the adaption of Buckley's example \cite{Buck} to the fractional setting that was devised in \cite{CUM}. First we observe that we can  restrict ourselves to the case in which $p'/q\geq 1$, since the case $p'/q<1$ follows at once by duality, taking into account the fact that $(I_\alpha)_b^m$ is essentially self-adjoint (in this case, $[(I_\alpha)_b^m]^*=(-1)^m(I_\alpha)_b^m$) and the fact that if $w\in A_{p,q}$, then $w^{-1}\in A_{q',p'}$ and $[w^{-1}]_{A_{q',p'}}=[w]_{A_{p,q}}^{p'/q}$.

Suppose then that $p'/q\geq 1$, and take $\delta\in (0,1)$. Define the weight $w_\delta(x)=|x|^{(n-\delta)/p'}$ and the power functions $f_\delta(x)=|x|^{\delta-n}\chi_{B(0,1)}(x)$. Easy computations yield
\[ \|f_\delta\|_{L^{p}(w_\delta^p)}\asymp \delta^{-1/p},\quad \text{and}\quad [w_\delta]_{A_{p,q}}\asymp \delta^{-q/p'}.\]
Let $b$ be the $\BMO$ function $b(x)=\log|x|$. For $x\in \mathbb{R}^n$ with $|x|\geq 2$, we have that 
\[
\begin{split}
(I_\alpha)_b^mf_\delta(x)&=\int_{B(0,1)}\frac{\log^m(|x|/|y|)}{|x-y|^{n-\alpha}}|y|^{\delta-n}dy\\
&\geq|x|^{\delta-n+\alpha}\int_{B(0,|x|^{-1})}\frac{\log^m(1/|z|)}{(1+|z|)^{n-\alpha}}|z|^{\delta-n}dz\\
&\geq c_n \frac{|x|^\delta}{(1+|x|)^{n-\alpha}}\int_0^{|x|^{-1}}\log^m(1/r)r^{\delta-1}dr.
\end{split}
\]
Integration by parts yields
\[
\begin{split}
  \int_0^{|x|^{-1}}\log^m(1/r)r^{\delta-1}dr&=\delta^{-1}|x|^{-\delta}\log^m|x|\sum_{k=0}^m\frac{m!}{(m-k)!\delta^k\log^k|x|}\\
  &\geq\delta^{-1}|x|^{-\delta}\log^m|x| \sum_{k=0}^m\binom{m}{k}(\delta^{-1}\log^{-1}|x|)^k\\
  &=\delta^{-1}|x|^{-\delta}\log^m|x|(\delta^{-1}\log^{-1}|x|+1)^m\\
  &\geq \delta^{-m-1}|x|^{-\delta}.
\end{split}
\]
Then,
\[
(I_\alpha)_b^mf_\delta(x)\geq \frac{c_n}{\delta^{m+1}|x|^{n-\alpha}},\qquad |x|\geq2.
\]
Hence, taking into account that $\frac{1}{q}+\frac{\alpha}{n}=\frac{1}{p}$, we have that
\[
\begin{split}
  \|(I_\alpha)_b^mf_\delta\|_{L^q(w_\delta^q)}&\geq c_n\delta^{-(m+1)}\left(\int_{|x|\geq 2}\frac{|x|^{(n-\delta)\frac{q}{p'}}}{|x|^{(n-\alpha)q}}dx\right)^{1/q}\\
  &=c_n\delta^{-(m+1)}\left(\int_{|x|\geq 2}|x|^{-\delta q/p'-n}dx\right)^{1/q}\\
  &=c\delta^{-(m+1)-\frac{1}{q}}\\
  &=c[w_\delta]_{A_{p,q}}^{\left(m+1-\frac{\alpha}{n}\right)\frac{p'}{q}}\|f_\delta\|_{L^p(w_\delta^p)}\\
  &=c[w_\delta]_{A_{p,q}}^{\left(m+1-\frac{\alpha}{n}\right)\max\left\{1,\frac{p'}{q}\right\}}\|f_\delta\|_{L^p(w_\delta^p)},
\end{split}
\]
so the exponent in \eqref{Cor1} is sharp.

\begin{rem}
We would like to point out that an alternative argument to settle the sharpness we have just obtained follows from the combination of arguments in Sections 3.1 and 3.4 in \cite{LPR}.
\end{rem}

\subsection{Some further remarks. Mixed $A_{p,q}-A_{\infty}$ bounds}
We recall that  $w\in A_\infty$ if and only if
\[[w]_{A_\infty}=\sup_Q\frac{1}{w(Q)}\int_QM(w\chi_Q)<\infty\]
and that is, up until now (see \cite{FH}), the smallest constant characterizing the $A_\infty$ class. 
We would like to point out that it would be possible to provide mixed estimates for $(I_\alpha)_b^m$ in terms of this $A_\infty$ constant. For that purpose it suffices to follow the same argument used to establish Theorem \ref{thm:BloomFrac}, but taking into account that, if $w\in A_p$ and we call $\sigma=w^{\frac{1}{1-p}}$, then
\[\|A_\mathcal{S}\|_{L^p(w)}\leq c_{n,p}[w]_{A_p}^{\frac{1}{p}}\left([w]_{A_\infty}^{\frac{1}{p'}}+[\sigma]_{A_\infty}^{\frac{1}{p}}\right).\]
Also, if $\alpha\in(0,n)$,  $1<p<\frac{n}{\alpha}$, $q$ is defined by $\frac{1}{q}+\frac{\alpha}{n}=\frac{1}{p}$ and $w\in A_{p,q}$ then, if, again, $\sigma=w^{\frac{1}{1-p}}$,
\[\|I_\mathcal{S}^\alpha\|_{L^p(w^p)\rightarrow L^q(w^q)}\leq c_{n,p}[w]_{A_{p,q}}\left([w^q]_{A_\infty}^{\frac{1}{p'}}+[\sigma^p]_{A_\infty}^{\frac{1}{q}}\right).\]
The preceding estimate was established in \cite{CUM2} and is also contained in the recent work \cite{FH}.

\section*{Acknowledgements}
The third author would like to thank D. Cruz-Uribe for drawing his attention to the problem of sparse domination for iterated commutators of fractional integrals during the Spring School on Analysis 2017 held at Paseky nad Jizerou.

The first and the third authors are supported by the Spanish Ministry of E\-co\-no\-my and Competitiveness grant though the project MTM2014-53850-P and also by the grant IT-641-13 of the Basque Government.

The second and the third authors are supported by the Basque Government through the BERC 2014-2017 program and by Spanish Ministry of Economy and Competitiveness MINECO through BCAM Severo Ochoa excellence accreditation SEV-2013-0323.

The second author is also supported by ``la Caixa'' grant.

\printbibliography

\end{document}